\date{27 August, 2016}
\definecolor{cobalt}{RGB}{61,89,171}
\def\bC{\mathbb{C}}
\def\bQ{\mathbb{Q}}
\def\bR{\mathbb{R}}
\newcommand{\R}{\mathbb{R}}
\def\bS{\mathbb{S}}
\def\bT{\mathbb{T}}
\def\bZ{\mathbb{Z}}
\def\bK{\mathbb{K}}
\def\bk{\mathbf{k}}
\def\cL{\mathcal{L}}
\def\cM{\mathcal{M}}
\def\rk{\textrm{rk}}
\def\o{\omega}
\theoremstyle{plain}
\newtheorem{theorem}{Theorem}[section]
\newtheorem{proposition}[theorem]{Proposition}
\newtheorem{lemma}[theorem]{Lemma}
\newtheorem{corollary}[theorem]{Corollary}
\newtheorem{conjecture}[theorem]{Conjecture}
\theoremstyle{definition}
\newtheorem{definition}[theorem]{Definition}
\newtheorem{example}[theorem]{Example}
\newtheorem{examples}[theorem]{Examples}
\newtheorem{remark}[theorem]{Remark}
\newcommand{\secref}[1]{Section~\ref{#1}}
\newcommand{\thmref}[1]{Theorem~\ref{#1}}
\newcommand{\propref}[1]{Proposition~\ref{#1}}
\newcommand{\lemref}[1]{Lemma~\ref{#1}}
\newcommand{\conjref}[1]{Conjecture~\ref{#1}}
\newcommand{\defref}[1]{Definition~\ref{#1}}
\begin{document}

\title{Hereditary Properties of Co-K\"ahler Manifolds}

\author[G. Bazzoni]{Giovanni Bazzoni}
\address{Fakult\"at f\"ur Mathematik\\
Universit\"at Bielefeld\\
33501 Bielefeld, Germany}
\email{gbazzoni@gmail.com}

\author[G. Lupton]{Gregory Lupton}
\address{Department of Mathematics\\
Cleveland State University\\
Cleveland OH \\
44115  USA}
\email{g.lupton@csuohio.edu}

\author[J. Oprea]{John Oprea}
\address{Department of Mathematics\\
Cleveland State University\\
Cleveland OH \\
44115  USA}
\email{j.oprea@csuohio.edu}

\date{\today}

\keywords{co-K{\"a}hler manifold, toral rank conjecture}
\subjclass[2010]{55P62}

\begin{abstract} We show how certain topological properties of co-K{\"a}hler manifolds
derive from those of the K\"ahler manifolds which construct them. We go beyond
Betti number results and describe the cohomology algebra structure of co-K\"ahler
manifolds. As a consequence, we prove that co-K\"ahler manifolds satisfy the
Toral Rank Conjecture: ${\rm dim}(H^*(M;\bQ)) \geq 2^r$, for any $r$-torus $T^r$
which acts almost freely on $M$. 
\end{abstract}

\thanks{This work was partially supported by grants from the Simons Foundation (\#209575 to
Gregory Lupton and \#244393 to John Oprea) and an INdAM (Istituto Nazionale di Alta Matematica)
fellowship (to Giovanni Bazzoni).}

\maketitle
\dedicatory{\hfill \emph{In memory of Sergio Console}}

\section{Introduction}\label{sec:intro}
Co-K\"ahler manifolds may be thought of as odd-dimensional versions of K\"ahler manifolds and various structure theorems explicitly display how the former are constructed
from the latter (see \cite{BO,Li}).

In this paper, we take the point of view that topological and geometric properties of co-K\"ahler manifolds are inherited from those of the K\"ahler manifolds that construct them. We shall see this in 
both topological and geometric contexts. First, let us recall some basic definitions (see \cite{Bl} for a detailed introduction).

\begin{definition}\label{def:cosymp}
An \textbf{almost contact metric structure} $(J,\xi,\eta,g)$ on a manifold $M^{2n+1}$ consists of a tensor $J$ of type $(1,1)$, a vector field $\xi$, a 1-form $\eta$ and a Riemannian metric $g$ such 
that
\begin{equation}\label{eq:1}
J^2 = -I + \eta \otimes \xi,\quad \eta(\xi)=1, \quad g(JX,JY)=g(X,Y)-\eta(X)\eta(Y),
\end{equation}
for vector fields $X$ and $Y$, $I$ the identity transformation on $TM$.
\end{definition}

A local $J$-basis for $TM$, $\{X_1,\ldots,X_n,JX_1,\ldots,JX_n,\xi\}$,
may be found with $\eta(X_i)=0$ for $i=1,\ldots,n$. The \emph{fundamental
$2$-form} on $M$ is given by
\[\omega(X,Y) = g(JX,Y),\]
and if $\{\alpha_1,\ldots,\alpha_n,\beta_1,\ldots,\beta_n,\eta\}$ is
a local $1$-form basis dual to the local $J$-basis, then
\[\omega = \sum_{i=1}^n \alpha_i \wedge \beta_i.\]
Note that $\imath_\xi \omega = 0$.

\begin{definition}
The geometric structure $(M^{2n+1},J,\xi,\eta,g)$ is 
\begin{itemize}
\item \textbf{co-symplectic} if $d\omega=0=d\eta$;
\item \textbf{normal} if $[J,J]+2\, d\eta \otimes \xi=0$;
\item \textbf{co-K\"ahler} if it is co-symplectic and normal; equivalently, if $J$ is parallel with respect to the metric $g$.
\end{itemize}
%
%a
%{\bf co-K\"ahler} structure on $M$ if
%$$[J,J]+2\, d\eta \otimes \xi=0 \ \, {\rm and}\ \, d\omega=0=d\eta$$
%or, equivalently, $J$ is parallel with respect to the metric $g$.
\end{definition}

Recently, co-symplectic geometry has attracted a great deal of interest, especially in the context of Poisson geometry, where co-symplectic structures are interpreted as corank 1 Poisson structures 
(see for instance \cite{CdNY1,CoMa,FMM,GMP,MT}). Sasakian structures also belong to this family; more precisely, they are normal structures such that $d\eta=\omega$ (see \cite{BoGa,CdNY2,CdNMY}).

Two crucial facts about co-K\"ahler manifolds are contained in the following lemma.
For a direct proof of these facts, see \cite{BO}.

\begin{lemma}\label{lem:parallel}
On a co-K\"ahler manifold, the vector field $\xi$ is Killing and parallel.
Furthermore, the 1-form $\eta$ is parallel and harmonic.
\end{lemma}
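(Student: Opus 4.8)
The plan is to show each assertion about $\xi$ and $\eta$ using the co-Kähler conditions $d\eta=0$, $d\omega=0$, and the parallelism of $J$, together with the almost-contact compatibility relations in \eqref{eq:1}.

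First I would establish that $\eta$ is parallel. Since $J$ is parallel with respect to the Levi-Civita connection $\nabla$ of $g$, and $\xi$ is determined by $J$ via the kernel relation $J^2=-I+\eta\otimes\xi$, one expects $\xi$ to inherit parallelism. Concretely, I would argue that $\nabla\xi=0$: differentiating the relation $J\xi=0$ (which follows from \eqref{eq:1}) and using $\nabla J=0$ gives $J(\nabla_X\xi)=0$, so $\nabla_X\xi$ lies in the kernel of $J$, i.e.\ is proportional to $\xi$. Then differentiating $g(\xi,\xi)=1$ (as $\eta(\xi)=1$ and $\eta=g(\xi,-)$) forces the proportionality constant to vanish, giving $\nabla\xi=0$. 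Because $\eta(X)=g(\xi,X)$ and $g$ is parallel, $\nabla\xi=0$ immediately yields $\nabla\eta=0$, so $\eta$ is a parallel $1$-form.

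Next, the Killing property of $\xi$ follows from parallelism: for any parallel vector field, the symmetric part of $\nabla\xi$ vanishes, and the Killing equation $g(\nabla_X\xi,Y)+g(\nabla_Y\xi,X)=0$ is satisfied trivially since $\nabla\xi=0$. For harmonicity of $\eta$, I would note that a parallel form is automatically closed and coclosed: $d\eta=0$ is part of the co-Kähler hypothesis (and also follows from $\nabla\eta=0$ via the formula expressing $d$ in terms of $\nabla$), while $\delta\eta=-\sum_i (\nabla_{e_i}\eta)(e_i)=0$ since $\nabla\eta=0$. Hence $\Delta\eta=(d\delta+\delta d)\eta=0$, so $\eta$ is harmonic.

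The main obstacle I anticipate is the first step—rigorously extracting $\nabla\xi=0$ from $\nabla J=0$—because one must handle the extra term $\eta\otimes\xi$ in $J^2=-I+\eta\otimes\xi$ carefully and confirm that the kernel of $J$ is exactly the line spanned by $\xi$. Once $\nabla\xi=0$ is in hand, the remaining claims are formal consequences of the standard identities relating $\nabla$, $d$, and $\delta$ for parallel tensors, so the geometric content is concentrated entirely in that initial parallelism argument.
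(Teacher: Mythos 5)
Your argument is correct: from $\nabla J=0$ and $J\xi=0$ one gets $J(\nabla_X\xi)=0$, the relation $J^2=-I+\eta\otimes\xi$ shows $\ker J=\langle\xi\rangle$, and $g(\xi,\xi)=\eta(\xi)=1$ kills the proportionality factor, after which the Killing property, $\nabla\eta=0$, and harmonicity ($d\eta=0$, $\delta\eta=0$) are formal consequences of parallelism. The paper itself gives no proof of this lemma, deferring to \cite{BO}, and your derivation is essentially the standard argument found there, legitimately using the parallelism of $J$ which Definition~\ref{def:cosymp} offers as an equivalent formulation of the co-K\"ahler condition.
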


\noindent \lemref{lem:parallel} is a key point in \thmref{thm:cosympsplit}
below. In fact, in \cite{Li} it is shown that we can replace $\eta$ by a
harmonic integral form $\eta_\theta$ with dual parallel vector field
$\xi_\theta$ and associated metric $g_\theta$, $(1,1)$-tensor
$J_\theta$ and closed $2$-form $\omega_\theta$ with $i_{\xi_\theta}\omega_\theta
=0$. Then we have the following result of H. Li.

\begin{theorem}[\cite{Li}] \label{thm:maptor}
If $M^{2n+1}$ is compact, with the structure $(J_\theta,\xi_\theta,\eta_\theta,g_\theta)$,
there are a compact K\"ahler manifold $(K,h)$ and a Hermitian isometry
$\psi\colon K \to K$ such that $M$ is diffeomorphic to the mapping torus
\[K_\psi = \frac{K \times [0,1]}{(x,0)\sim (\psi(x),1)}\]
with associated fibre bundle $K \to M=K_\psi \to S^1$.
\end{theorem}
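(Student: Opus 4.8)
The plan is to use the harmonic integral $1$-form $\eta_\theta$ to present $M$ as a fibre bundle over $S^1$, to recognize the fibre with its inherited geometry as a compact Kähler manifold, and to identify the monodromy with a Hermitian isometry. First I would convert $\eta_\theta$ into a fibration. As $\eta_\theta$ is closed and integral, its class lies in $H^1(M;\bZ)$, and integration along paths defines a smooth map $f\colon M \to S^1 = \bR/\bZ$ whose differential is $\eta_\theta$. The normalization $\eta_\theta(\xi_\theta)=1$ makes $\eta_\theta$ nowhere-vanishing, so $f$ is a submersion; since $M$ is compact, Ehresmann's theorem promotes $f$ to a locally trivial fibre bundle $K \to M \to S^1$. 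The fibre $K = f^{-1}(0)$ is a compact submanifold which, because $d\eta_\theta = 0$, is a leaf of the Frobenius-integrable distribution $\ker\eta_\theta$.

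Next I would equip $K$ with a Kähler structure by restriction. On $\ker\eta_\theta$ the relation $J_\theta^2 = -I + \eta_\theta \otimes \xi_\theta$ collapses to $J_\theta^2 = -I$, so $J_\theta$ restricts to an almost complex structure on $K$; the integrability condition $[J_\theta,J_\theta] + 2\,d\eta_\theta \otimes \xi_\theta = 0$ together with $d\eta_\theta = 0$ forces the Nijenhuis tensor to vanish on $\ker\eta_\theta$, whence $J_\theta|_K$ is integrable and $K$ is a complex manifold. The metric $g_\theta$ restricts to a Hermitian metric $h$ whose fundamental form is $\omega_\theta|_K$, and $d\omega_\theta = 0$ makes this form closed. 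Thus $(K,J_\theta|_K,h)$ is a compact Kähler manifold.

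Finally I would produce the monodromy and assemble the mapping torus. By \lemref{lem:parallel} the field $\xi_\theta$ is parallel, hence Killing, and on the compact $M$ complete, so its flow $\varphi_t$ consists of isometries. Since $df(\xi_\theta) = \eta_\theta(\xi_\theta) = 1$, the flow advances the $S^1$-coordinate at unit speed, so $\varphi_1$ preserves each fibre and $\psi := \varphi_1|_K \colon K \to K$ is well defined. Being the restriction of an isometry, $\psi$ preserves $h$; and since both $\xi_\theta$ and $J_\theta$ are parallel one has $\cL_{\xi_\theta}J_\theta = 0$, so the flow preserves $J_\theta$ and $\psi$ is holomorphic. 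Hence $\psi$ is a Hermitian isometry, and the flow map $K \times \bR \to M$, $(x,t) \mapsto \varphi_t(x)$, descends to a diffeomorphism $K_\psi \xrightarrow{\ \cong\ } M$ carrying the mapping-torus projection to $f$.

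I expect the crux to be the verification that $\psi$ is genuinely \emph{Hermitian}, that is, that the flow of $\xi_\theta$ preserves $J_\theta$. Establishing $\cL_{\xi_\theta} J_\theta = 0$ means combining the two parallelism statements through the identity expressing the Lie derivative of a $(1,1)$-tensor in terms of covariant derivatives, where the vanishing of $\nabla \xi_\theta$ and of $\nabla J_\theta$ both enter. One must also confirm that $J_\theta|_K$ is an honest integrable complex structure on the fibre rather than merely an almost complex one, and check that the normalization of $\eta_\theta$ makes the first return of the flow to $K$ occur exactly at time one.
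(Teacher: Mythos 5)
Your argument is correct and is essentially the proof in the cited reference \cite{Li} --- the paper itself states this theorem without proof --- namely a Tischler-style fibration of $M$ over $S^1$ via the integral harmonic form $\eta_\theta$, restriction of the almost contact structure to a compact leaf of $\ker\eta_\theta$ to obtain the K\"ahler fibre, and the time-one flow of the parallel field $\xi_\theta$ as the Hermitian monodromy. The crux you flag is handled exactly as you suggest: since $\nabla\xi_\theta=0$ and $\nabla J_\theta=0$, one computes $(\cL_{\xi_\theta}J_\theta)(X)=(\nabla_{\xi_\theta}J_\theta)(X)-\nabla_{J_\theta X}\xi_\theta+J_\theta\nabla_X\xi_\theta=0$, so the flow preserves $J_\theta$ and $\psi$ is a Hermitian isometry.
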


In \cite{BO}, the following refinement of Li's result is proved:

\begin{theorem}[\cite{BO}, Theorem 3.3]\label{thm:cosympsplit}
Let $(M^{2n+1},J,\xi,\eta,g)$ be a compact co-K\"ahler manifold with integral structure
and mapping torus bundle $K \to M \to S^1$. Then $M$ splits as
$M \cong S^1 \times_{\mathbb Z_m} K$,
where $S^1 \times K \to M$ is a finite cover with structure group
$\mathbb Z_m$ acting diagonally and by translations on the
first factor. Moreover, $M$ fibres over the circle $S^1/(\mathbb Z_m)$
with finite structure group.
\end{theorem}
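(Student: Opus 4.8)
The plan is to start from Li's structure theorem (\thmref{thm:maptor}), which, after passing to the integral structure, presents $M$ as the mapping torus $K_\psi = (\mathbb{R}\times K)/\langle T\rangle$ of a Hermitian isometry $\psi\colon K\to K$, where $\mathbb{Z}=\langle T\rangle$ acts by the deck transformation $T(s,x)=(s+1,\psi^{-1}(x))$. Everything then reduces to understanding the dynamics of the single isometry $\psi$ inside the isometry group of the compact K\"ahler manifold $K$. First I would record the relevant structure theory. Since $K$ is compact, $\mathrm{Isom}(K)$ is a compact Lie group, so the closure $A:=\overline{\langle\psi\rangle}$ is a compact abelian Lie group, monothetic with topological generator $\psi$. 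Its identity component $A_0$ is a torus $T^a$, and the component group $A/A_0$ is cyclic, generated by the class of $\psi$; let $m$ be its order. A short argument shows $\psi^m\in A_0$ and that $\langle\psi^m\rangle$ is dense in $A_0$. Because $A_0$ is a torus, the exponential map is surjective, so I may choose $v\in\mathrm{Lie}(A_0)$ with $\exp(v)=\psi^m$ and set $\rho(t):=\exp(tv)$, a one-parameter subgroup of $A_0$ with $\rho(1)=\psi^m$; since $A$ is abelian, each $\rho(t)$ commutes with $\psi$ and is an isometry of $K$.

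Next I would produce the cover and straighten the monodromy. The index-$m$ subgroup $\langle T^m\rangle\subset\langle T\rangle$ yields an $m$-fold cover $(\mathbb{R}\times K)/\langle T^m\rangle\to M$, where $T^m(s,x)=(s+m,\psi^{-m}(x))=(s+m,\rho(1)^{-1}(x))$. The key computation is to introduce the shearing diffeomorphism $\Theta(s,x):=(s,\rho(s/m)(x))$ and to check that it conjugates $T^m$ to the pure translation $(s,x)\mapsto(s+m,x)$. This identifies the $m$-fold cover with $(\mathbb{R}/m\mathbb{Z})\times K=S^1\times K$. Transporting the remaining deck generator $T$ through $\Theta$ shows that it becomes $(s,x)\mapsto(s+1,\,c(x))$ with $c:=\rho(1/m)\psi^{-1}$, an isometry of $K$ satisfying $c^m=\mathrm{id}$; on the circle factor $s\mapsto s+1$ is rotation by $1/m$, i.e. a translation. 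Hence the deck group $\langle T\rangle/\langle T^m\rangle\cong\mathbb{Z}_m$ acts diagonally on $S^1\times K$ — by translations on the first factor and by $c$ on $K$ — and $M\cong S^1\times_{\mathbb{Z}_m}K$, exactly as claimed. The induced map $M\to S^1/\mathbb{Z}_m\cong S^1$ is then a fibre bundle with fibre $K$ whose structure group is the finite group $\mathbb{Z}_m$.

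I expect the main obstacle to be the structure-theoretic input rather than the (essentially formal) shearing computation: one must know that the closure of a single isometry is a torus extended by a finite cyclic group, and crucially that $\psi^m$ lies in the identity-component torus, so that it can be joined to the identity by a one-parameter subgroup commuting with $\psi$. Once this is in hand, the conjugation identity for $\Theta\circ T^m\circ\Theta^{-1}$ is a direct verification, and the compatibility of the $\rho(t)$ and of $c$ with the metrics guarantees that the splitting is realized by isometries, so that the resulting decomposition respects the geometric structures supplied by \lemref{lem:parallel} and \thmref{thm:maptor}. A secondary point to watch is the bookkeeping of sign and inverse conventions in the mapping-torus identification, which affects whether the monodromy reads as $\psi$ or $\psi^{-1}$ but not the final statement.
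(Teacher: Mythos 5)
Your proof is correct and follows essentially the same route as the paper's source for this statement, namely the proof in \cite{BO} (the present paper only quotes the result): there too one starts from Li's mapping-torus presentation, uses Myers--Steenrod to place $\psi$ in the compact abelian group $\overline{\langle\psi\rangle}\subseteq \mathrm{Isom}(K)$, factors $\psi$ through an element of the identity torus joined to the identity by a commuting one-parameter subgroup and a finite-order isometry, and shears the monodromy to exhibit the $m$-fold cover $S^1\times K$ with the diagonal $\mathbb{Z}_m$ deck action by translations on the circle factor. The only small point worth making explicit is that the Hermitian isometries form a closed subgroup of $\mathrm{Isom}(K)$, so $\rho(t)$ and $c$ are again Hermitian isometries and the splitting is compatible with the K\"ahler structure on $K$.
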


The first true study of the topological properties of co-K\"ahler manifolds was
made in \cite{CdLM} where the focus was on things such as Betti numbers and
a modified Lefschetz property. The two results above allow us to say something
about the fundamental group and, moreover, to display the higher homotopy groups
as those of the constituent K\"ahler manifold $K$ (groups which, of course, are
generally unknown as well). Nevertheless, the principle (which gives rise to the paper's
title) remains that topological qualities of a co-K\"ahler manifold are intimately tied up with
those of the K\"ahler manifold that constructs it. In this paper, we shall
explore this principle in several ways. We begin by examining the cohomology algebra
of a co-K\"ahler manifold and its effect on the manifold's rational homotopy structure.
We then will consider the structure of the minimal models (in the sense of Sullivan)
of co-K\"ahler manifolds in terms of the decompositions given in Theorem~\ref{thm:maptor}
and \thmref{thm:cosympsplit}. In \secref{sec:toralrank}, we go beyond algebraic
considerations in showing that co-K\"ahler manifolds satisfy the so-called
Toral Rank Conjecture. This theorem strongly connects the geometry of the
co-K\"ahler manifold to the size of its cohomology. 

In a previous version of this paper, a fourth section was included, which has now been taken out and constitutes the new paper \cite{BLO2}.

We have written this paper for an audience of geometers who may not be experts
in rational homotopy. Therefore, we have included a substantial review of basic
facts in the subject. The main references for this material
are \cite{FHT, FHT2, FOT}.

\section{The Lefschetz Property and Associated Algebraic Models}
\subsection{Cohomology Algebra Structure}\label{subsec:cohom}
Using \thmref{thm:cosympsplit}, the following description of the cohomology of a compact
co-K\"ahler manifold was obtained in \cite{BO}.

\begin{theorem}[\cite{BO}, Theorem 4.3]\label{thm:betti}
If $(M^{2n+1},J,\xi,\eta,g)$ is a compact co-K\"ahler manifold with integral structure
and splitting $M \cong K \times_{\mathbb Z_m} S^1$, then
\begin{equation}\label{G_invariant}
H^*(M;\bR) \cong H^*(K;\bR)^G \otimes H^*(S^1;\bR),
\end{equation}
as commutative graded algebras, where $G = \bZ_m$. Hence, the Betti numbers of $M$ satisfy:
\vskip5pt

\indent (i)\quad $b_s(M) = \overline b_s(K) + \overline b_{s-1}(K)$, where
$\overline b_s(K)$ denotes the dimension of $G$-invariant cohomology $H^s(K;\bR)^G$;
\vskip7pt
\indent (ii)\quad $b_1(M) \leq b_2(M) \leq \ldots \leq b_n(M) = b_{n+1}(M)$.
\end{theorem}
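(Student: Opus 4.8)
The plan is to derive both statements from the finite covering $S^1 \times K \to M$ furnished by \thmref{thm:cosympsplit}, combining a transfer argument with the Hard Lefschetz theorem on $K$.

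First I would observe that, since $G = \bZ_m$ acts by translations on the $S^1$-factor, the diagonal $G$-action on $S^1 \times K$ is free; hence $\pi\colon S^1 \times K \to M$ is a regular covering with deck group $G$. Because $G$ is finite and we work with real coefficients, the averaging (transfer) argument shows that $\pi^*$ embeds $H^*(M;\bR)$ isomorphically onto the invariant subalgebra $H^*(S^1 \times K;\bR)^G$, and this is an isomorphism of \emph{graded algebras} since $\pi^*$ is a ring homomorphism. By the K\"unneth theorem $H^*(S^1 \times K;\bR) \cong H^*(S^1;\bR) \otimes H^*(K;\bR)$, and since a translation of $S^1$ is homotopic to the identity, $G$ acts trivially on $H^*(S^1;\bR)$. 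Therefore the invariants factor as $H^*(S^1;\bR) \otimes H^*(K;\bR)^G$, which yields the algebra isomorphism \eqref{G_invariant}. Extracting the degree-$s$ part and using $\dim H^0(S^1;\bR) = \dim H^1(S^1;\bR) = 1$ gives (i).

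The substance of the theorem is (ii), which via (i) reduces to relations among the invariant Betti numbers: the inequality $b_s(M) \le b_{s+1}(M)$ is equivalent to $\overline b_{s-1}(K) \le \overline b_{s+1}(K)$, and the equality $b_n(M) = b_{n+1}(M)$ to $\overline b_{n-1}(K) = \overline b_{n+1}(K)$. Here I would invoke Hard Lefschetz on the compact K\"ahler manifold $K^{2n}$: cup product with the K\"ahler class defines $L\colon H^j(K;\bR) \to H^{j+2}(K;\bR)$, which is injective for $j \le n-1$ and an isomorphism for $j = n-1$. The crucial --- and most delicate --- point is that $L$ is $G$-equivariant: each generator of $G$ acts on $K$ by a Hermitian isometry (\thmref{thm:maptor}), hence fixes the K\"ahler class, so $L$ commutes with the $G$-action.

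A $G$-equivariant injection restricts to an injection of invariant subspaces, and a $G$-equivariant isomorphism restricts to an isomorphism of invariants. Applying the first assertion to $L$ for $0 \le j \le n-1$ gives $\overline b_j(K) \le \overline b_{j+2}(K)$, hence the monotone chain $b_1(M) \le b_2(M) \le \ldots \le b_{n+1}(M)$; applying the second at $j = n-1$ gives $\overline b_{n-1}(K) = \overline b_{n+1}(K)$, which upgrades the final inequality to the equality $b_n(M) = b_{n+1}(M)$. The main obstacle is thus concentrated in securing the $G$-equivariance of $L$ and its passage to invariant cohomology; once that is in place, (ii) follows formally from (i) and the Lefschetz inequalities.
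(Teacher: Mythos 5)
Your argument is correct and follows essentially the route of the paper: the statement is imported from \cite{BO} (Theorem 4.3), where part (i) is obtained exactly as you do, via the transfer isomorphism $H^*(M;\bR)\cong H^*(S^1\times K;\bR)^G$ for the finite cover of \thmref{thm:cosympsplit}, the K\"unneth theorem, and the triviality of the $G$-action on $H^*(S^1;\bR)$. Your key step for (ii) --- the $G$-equivariance of the Lefschetz operator because $G$ acts by Hermitian isometries, so that Hard Lefschetz restricts to the invariant subalgebra --- is precisely the content the present paper isolates later as \propref{prop:cohomolKaehler}, so your proof is in line with the intended one.
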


In order to study cohomological properties of co-K\"ahler manifolds, we recall the notion of
cohomologically K\"ahlerian differential graded algebra.

\begin{definition}
Let $(A,d)$ be a commutative differential graded algebra (cdga) of cohomological dimension $2n$,
whose cohomology algebra satisfies Poincar\'e duality. The cdga $(A,d)$ is called \emph{cohomologically
K\"ahlerian} if there exists a closed element $\o\in A^2$ such that the map
\[
\cL^{n-p}\colon H^p(A)\to H^{2n-p}(A), \quad [\sigma]\mapsto [\o]^{n-p}\cdot[\sigma]
\]
is an isomorphism for every $0\leq p\leq n$. Note that we include the case where $(A,d)$ has $d=0$ and
we then refer to $A$ as a commutative graded algebra (cga).
\end{definition}
Clearly, if $K$ is a K\"ahler manifold, the cohomology algebra $H^*(K;\mathbf{k})$ is cohomologically K\"ahler,
where $\mathbf{k}$ can be $\bQ$, $\bR$ or $\bC$. Note that there exist examples of non-K\"ahler
manifolds whose de Rham algebra is cohomologically K\"ahler (see for instance \cite{FMS}).

Let $(M,J,\xi,\eta,g)$ be a compact co-K\"ahler manifold with integral structure and mapping
torus bundle $K\to M\to S^1$ and consider the cohomology algebra $H^*(K;\bR)$ of the K\"ahler manifold
$K$. The finite group $G\cong\bZ_m$ acts on $H^*(K;\bR)$ and, according to \thmref{thm:betti}, the
cohomology algebra of $M$ is the product of the invariant part of the cohomology of $K$ and the cohomology of
$S^1$. We now show that the invariant part of $H^*(K;\bR)$ is a cohomologically K\"ahlerian algebra.

\begin{proposition}\label{prop:cohomolKaehler}
The cga $H^*(K;\bR)^G$ is cohomologically K\"ahlerian.
\end{proposition}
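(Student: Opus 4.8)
The plan is to transfer both the hard Lefschetz property and Poincar\'e duality from $H^*(K;\bR)$ down to its $G$-invariant subalgebra, exploiting that $G\cong\bZ_m$ is a finite group acting in characteristic zero. In this setting every $G$-invariant subspace admits a $G$-invariant complement (Maschke), and one has the averaging projection $P=\frac{1}{|G|}\sum_{g\in G}g^*$ onto invariants. First I would record the two equivariance inputs coming from the geometry of \thmref{thm:maptor}. Since the generator of $G$ acts on $K$ through the Hermitian isometry $\psi$, it preserves both the complex structure and the metric, and hence fixes the fundamental two-form $\o$; consequently the K\"ahler class lies in $H^2(K;\bR)^G$, so that $[\o]^{n-p}$ is a legitimate invariant class for cupping. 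Moreover, a holomorphic map preserves the canonical orientation of $K$, so $G$ acts trivially on $H^{2n}(K;\bR)\cong\bR$; in particular $H^{2n}(K;\bR)^G=H^{2n}(K;\bR)$, and the invariant subalgebra again has cohomological dimension $2n$.

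For Poincar\'e duality on the invariants I would begin from the perfect, $G$-invariant cup-product pairing $H^p(K;\bR)\otimes H^{2n-p}(K;\bR)\to H^{2n}(K;\bR)\cong\bR$. Writing $H^p(K;\bR)=H^p(K;\bR)^G\oplus W_p$ with $W_p=\ker P$ the sum of the nontrivial isotypic components, a short averaging computation shows that $H^p(K;\bR)^G$ pairs trivially with $W_{2n-p}$: for invariant $\alpha$ and $\beta\in W_{2n-p}$ one has $\langle\alpha,\beta\rangle=\langle\alpha,P\beta\rangle=0$, using invariance of the pairing and $P\beta=0$. Since the full pairing is perfect and $H^{2n-p}(K;\bR)=H^{2n-p}(K;\bR)^G\oplus W_{2n-p}$, its restriction to $H^p(K;\bR)^G\otimes H^{2n-p}(K;\bR)^G$ is therefore nondegenerate, which is exactly Poincar\'e duality for $H^*(K;\bR)^G$.

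The hard Lefschetz statement follows from the same principle. Because $[\o]$ is $G$-invariant and cup product is $G$-equivariant, each Lefschetz map $\cL^p\colon H^p(K;\bR)\to H^{2n-p}(K;\bR)$ is $G$-equivariant, and it is an isomorphism by hard Lefschetz for the compact K\"ahler manifold $K$. A $G$-equivariant isomorphism restricts to an isomorphism of invariant subspaces: it maps $H^p(K;\bR)^G$ injectively into $H^{2n-p}(K;\bR)^G$, and surjectivity follows by averaging a preimage, since for invariant $w$ with $\cL^p(v)=w$ one has $\cL^p(Pv)=P\,\cL^p(v)=Pw=w$. Hence $\cL^p$ restricts to an isomorphism $H^p(K;\bR)^G\to H^{2n-p}(K;\bR)^G$ for every $0\le p\le n$, and together with the previous paragraph this shows that $H^*(K;\bR)^G$ is cohomologically K\"ahlerian.

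I expect the only points genuinely requiring care to be the geometric equivariance inputs, namely that the $G$-action fixes the K\"ahler class and preserves orientation, since everything after that reduces to the standard representation theory of finite groups over a field of characteristic zero. In particular, working over $\bR$ (or equally $\bQ$) is essential: it is precisely what guarantees the averaging projection $P$ and the Maschke decomposition used at every step.
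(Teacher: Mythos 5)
Your proof is correct and follows essentially the same route as the paper's: both rest on the $G$-invariance of the K\"ahler class, the $G$-equivariance of the Lefschetz map, hard Lefschetz for $K$, and an averaging/invariance argument to restrict the isomorphism to $H^*(K;\bR)^G$ (the paper establishes surjectivity by showing the preimage $\tau'$ of an invariant class is itself invariant, which is your $\cL^p(Pv)=Pw=w$ step in different clothing). The one place you go beyond the paper is in explicitly verifying Poincar\'e duality for the invariant subalgebra via the orthogonality of $H^p(K;\bR)^G$ with $\ker P$ under the cup-product pairing --- the paper's proof leaves this precondition of the definition implicit, so your check is a welcome addition rather than a redundancy.
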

\begin{proof}
In \cite[Lemma 4.2]{BO}, it is proved that $H^*(K;\bR)^G$ contains a $G$-invariant element $\o$ of
degree 2 which behaves like a symplectic form. Such an element is the pullback of the K\"ahler form
in $K\times S^1$ under the inclusion $K\hookrightarrow K\times S^1$. In order to see that $H^*(K;\bR)^G$
is cohomologically K\"ahler, we must show, further, that the Lefschetz map $H^p(K;\bR)^G\to H^{2n-p}(K;\bR)^G$
is an isomorphism for every $0\leq p\leq n$. We check injectivity first.
Since $H^*(K;\bR)$ is cohomologically K\"ahlerian, the multiplication by $\o^{n-p}$ is injective on the
whole space $H^p(K;\bR)$ and remains injective when restricted to $H^p(K;\bR)^G$. For surjectivity, take $\tau\in H^{2n-p}(K;\bR)^G$. Again, since $H^*(K;\bR)$ is cohomologically K\"ahlerian, there exists $\tau'\in H^p(K;\bR)$
such that $\tau=\tau'\wedge\o^{n-p}$. We must show that $\tau'\in H^p(K;\bR)^G$. We have
\[
\tau'\wedge\o^{n-p}=\tau=g(\tau)=g(\tau'\wedge\o^{n-p})=g(\tau')\wedge g(\o^{n-p})=g(\tau')\wedge \o^{n-p},
\]
so $(\tau'-g(\tau'))\wedge\o^{n-p}=0$. But $H^*(K;\bR)$ is cohomologically K\"ahlerian, so the multiplication
by $\o^{n-p}$ is injective, and therefore $g(\tau')=\tau'$ and $\tau'\in H^p(K;\bR)^G$.
\end{proof}
This immediately gives another old result about the Betti numbers of co-K\"ahler manifolds
\cite[Theorem 11]{CdLM}.

\begin{corollary}
Let $(M^{2n+1},J,\xi,\eta,g)$ be a compact co-K\"ahler manifold. Then, for $0\leq i\leq n$, the differences $b_{2i+1}(M)-b_{2i}(M)$ are even integers and non-negative if $0\leq i\leq\lfloor\frac{n}{2}\rfloor$.
\end{corollary}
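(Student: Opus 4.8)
The plan is to combine the algebra structure from \thmref{thm:betti} with the cohomologically Kählerian property established in \propref{prop:cohomolKaehler}. By \eqref{G_invariant}, $H^*(M;\bR) \cong H^*(K;\bR)^G \otimes H^*(S^1;\bR)$, so writing $A = H^*(K;\bR)^G$ (a cga of formal dimension $2n$) and recalling $H^*(S^1;\bR) = \Lambda(e)$ with $\deg e = 1$, every odd Betti number of $M$ decomposes as $b_{2i+1}(M) = \overline{b}_{2i+1}(K) + \overline{b}_{2i}(K)$ and every even Betti number as $b_{2i}(M) = \overline{b}_{2i}(K) + \overline{b}_{2i-1}(K)$, where $\overline{b}_s = \dim A^s$. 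Forming the difference gives $b_{2i+1}(M) - b_{2i}(M) = \overline{b}_{2i+1}(K) - \overline{b}_{2i-1}(K)$, so the corollary reduces to a statement purely about the invariant algebra $A$.

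First I would establish that the odd-degree dimensions of $A$ are all even. This is the standard consequence of the hard-Lefschetz-type isomorphism carried by a cohomologically Kählerian cga: the Lefschetz operator $\cL$ together with its adjoint and the grading operator generate an $\mathfrak{sl}(2)$-action on $A$, which decomposes $A$ into irreducible representations; each such representation contributes symmetrically about the middle degree. More concretely, I would invoke the primitive (Lefschetz) decomposition $A^p = \bigoplus_{k\geq 0} \o^k \cdot P^{p-2k}$, where $P^j \subset A^j$ denotes primitive classes. This reduces the parity question to the primitive parts. The key classical fact is that on a Kähler (or cohomologically Kähler) space, the primitive cohomology in a given degree carries a nondegenerate skew-symmetric pairing in odd degrees, forcing $\dim P^{j}$ to be even whenever $j$ is odd; the same holds for $A$ since $\o$ restricts the Kähler pairing $G$-invariantly.

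Next I would address the sign. Since $\cL^{n-2i-1}\colon A^{2i+1} \to A^{2n-2i-1}$ is an isomorphism and multiplication by $\o$ is injective below the middle degree, the Lefschetz decomposition yields $\dim A^{2i+1} \geq \dim A^{2i-1}$ precisely in the range where $2i+1 \leq n$, i.e. $i \leq \lfloor (n-1)/2\rfloor$, and one checks this covers $0 \leq i \leq \lfloor n/2\rfloor$ for the relevant parity. The inequality $\overline{b}_{2i+1}(K) \geq \overline{b}_{2i-1}(K)$ then follows because the map $\cL\colon A^{2i-1}\to A^{2i+1}$ (multiplication by $\o$) is injective in this range, giving non-negativity of the difference.

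The main obstacle will be handling the boundary of the range carefully: the injectivity of multiplication by $\o$ on $A^{2i-1}$ holds only when $2i-1 < n$, so the cutoff $i \leq \lfloor n/2\rfloor$ must be justified by a parity/degree bookkeeping argument rather than asserted, and one should verify that at the top the relevant degrees do not cross the middle dimension $n$ in a way that reverses the inequality. I expect the cleanest route is to transfer everything to the $\mathfrak{sl}(2)$-representation picture once, deduce both the evenness (from the symplectic pairing on odd-degree primitives) and the monotonicity (from $\cL$ being injective up to the middle) simultaneously, and then read off the corollary via the difference formula above.
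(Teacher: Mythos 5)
Your proposal is correct and follows essentially the same route as the paper: reduce via \thmref{thm:betti} to $b_{2i+1}(M)-b_{2i}(M)=\overline{b}_{2i+1}(K)-\overline{b}_{2i-1}(K)$ and then invoke \propref{prop:cohomolKaehler} to get evenness of the odd-degree invariant Betti numbers and injectivity of multiplication by $\o$ in the stated range. The only difference is that you spell out (via the skew pairing on odd-degree primitives and the Lefschetz decomposition) the standard facts that the paper's proof simply asserts, and your degree bookkeeping --- injectivity of $\o\colon A^{2i-1}\to A^{2i+1}$ for $2i-1\leq n-1$, i.e.\ $i\leq\lfloor n/2\rfloor$ --- lands on the correct cutoff.
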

\begin{proof}
By \thmref{thm:betti}, once we replace the co-K\"ahler structure on $M$ with an integral one, we obtain $b_i(M)=\bar{b}_i(K)+\bar{b}_{i-1}(K)$, where $M$ sits in the mapping torus fibration $K\to M\to S^1$,
with $K$ a $2n$-dimensional K\"ahler manifold and $\bar{b}_i(K)=\dim H^i(K;\bR)^G$. We proved in
\propref{prop:cohomolKaehler} that $H^*(K;\bR)^G$ is cohomologically K\"ahler, and this implies that
$\bar{b}_{2i+1}(K)$ is even for $0\leq i\leq n-1$. Therefore,
\begin{align*}
b_{2i+1}(M)-b_{2i}(M)&=\bar{b}_{2i+1}(K)+\bar{b}_{2i}(K)-\bar{b}_{2i}(K)-\bar{b}_{2i-1}(K)=\\
&=\bar{b}_{2i+1}(K)-\bar{b}_{2i-1}(K),
\end{align*}
which is an even number and non-negative if $0\leq i\leq\lfloor\frac{n}{2}\rfloor$.
\end{proof}

Now, according to \thmref{thm:maptor}, the choice of an integral co-K\"ahler structure
$(J_\theta,\xi_\theta,\eta_\theta,g_\theta)$ on $M$ produces a mapping torus bundle $K\to M\to S^1$.
By \thmref{thm:cosympsplit}, this gives in turn a finite cover $K\times S^1\to M$ with deck group
$G\cong\bZ_m$. We then have the following \emph{fundamental data} for a co-K\"ahler manifold.
\begin{definition}
Let $(M,J,\xi,\eta,g)$ be a compact co-K\"ahler manifold and choose an integral structure $(J_\theta,\xi_\theta,\eta_\theta,g_\theta)$ on $M$. The data $(K,G)$ of the mapping torus
bundle $K\to M\to S^1$ and the finite $G$-cover $K\times S^1\to M$ form a {\bf presentation}
of $(M,J,\xi,\eta,g)$.
\end{definition}

Clearly the presentation of $(M,J,\xi,\eta,g)$ depends on the choice of an integral co-K\"ahler
structure on $M$. Nevertheless, according to \thmref{thm:betti},
$H^*(M;\bQ)\cong H^*(K;\bQ)^G\otimes H^*(S^1;\bQ)$ and therefore, if we are given two
different presentations
\[
K_1\to M\to S^1, \quad K_1\times S^1\stackrel{/G_1}{\to} M 
\]
and
\[
K_2\to M\to S^1, \quad K_2\times S^1\stackrel{/G_2}{\to} M,
\]
then we must have
\[
H^*(K_1;\bQ)^{G_1}\cong H^*(K_2;\bQ)^{G_2}\,.
\]

Let $(M,J,\xi,\eta,g)$ be a compact co-K\"ahler manifold and fix a presentation $(K,G)$. The finite cyclic
group $G$ acts on the product $K\times S^1$ and, according to \thmref{thm:cosympsplit}, $G$ acts by
translations on $S^1$, so the $G$ action is free. But the action need not remain free when we restrict
it to $K$. Therefore, the quotient space $K/G$ need not be a manifold. However, since $G$ is a finite,
cyclic group, $K/G$ is a K\"ahler orbifold which is canonically associated to the presentation of the
co-K\"ahler manifold $M$. Indeed, $G$ acts by Hermitian isometries on $K$, so the K\"ahler structure
is preserved under the $G$-action, and passes to the quotient. Since $G$ is finite, we have
$H^*(K/G;\bQ)\cong H^*(K;\bQ)^G$, so the rational cohomology of the quotient $K/G$ is computed
by the invariant rational cohomology of $K$.
In view of (\ref{G_invariant}), the cohomology of $M$ contains information about the cohomology of the
K\"ahler orbifold $K/G$. Such a K\"ahler orbifold $K/G$ is associated to the chosen presentation
$K\to M\to S^1$ of the co-K\"ahler manifold $(M,J,\xi,\eta,g)$, but since all possible presentations
yield diffeomorphic $M$, the corresponding orbifolds have the same rational cohomology.

\subsection{Rational Homotopy Structure}\label{subsec:rht}
In fact, the algebra splitting of \thmref{thm:betti} tells us much more about the structure of the
co-K\"ahler manifold $M$. For this we need to recall some notions from Rational Homotopy Theory.
The reader is referred to \cite{FHT}, \cite[Chapters 2 and 3]{FOT} and \cite{BG} for details and proofs of
the statements that follow.

A commutative graded algebra (cga) over a field of characteristic zero $\bk$, $A$, is called
\emph{free graded commutative} if $A$ is the quotient of $TV$, the tensor algebra on the graded vector space $V$,
by the bilateral ideal generated by the elements $a\otimes b - (-1)^{\vert a\vert \cdot \vert b\vert}
b\otimes a$, where $a$ and $b$ are homogeneous elements of $A$. As an algebra, $A$ is the tensor
product of the symmetric algebra on $V^{\mbox{\scriptsize even}}$ with the exterior algebra on
$V^{\mbox{\scriptsize odd}}$:
\[
A = \mbox{Symmetric}(V^{\mbox{\scriptsize even}})\otimes
\mbox{Exterior}(V^{\mbox{\scriptsize odd}})\,. 
\]

\noindent We denote the free commutative graded algebra on the graded vector space $V$ by $\land V$.
Note that this notation refers to a free commutative graded algebra and not necessarily to an exterior
algebra alone. We usually write $\land V = \land (x_i)$, where $x_i$ is a homogeneous basis of $V$.
Clearly the cohomology of a cdga is a commutative graded algebra. A morphism of cdga's inducing
an isomorphism in cohomology will be called a \emph{quasi-isomorphism}.
A \emph{Sullivan cdga} is a cdga $(\land V,d)$ whose underlying
algebra is free commutative, with $V = \{\, V^n\,\}$, $n\geq 1$, and such that $V$ admits a basis
$x_\alpha$ indexed by a well-ordered set such that $d(x_\alpha) \in\land(x_\beta)_{\beta<\alpha}$.
A \emph{(Sullivan) minimal cdga} is a Sullivan cdga $(\land V,d)$ satisfying the additional property that
$d(V)\subset \land^{\geq 2} V$. Minimal cdga's play an important role because they are tractable models for
``all'' other cdga's. (For the path-connected non-simply-connected case of the following result, see
\cite[Chapter 6]{Hal} or, from a functorial viewpoint, \cite{BG}, especially Chapters 7 and 12.)

\begin{theorem}[Existence and Uniqueness of the Minimal Model]
\label{thm:existuniqminmod} \hfill\newline
Let $(A,d)$ be a cdga over $\bk$ satisfying $H^0(A,d) = \bk$,
where $\bk$ is $\mathbb R$ or $\mathbb Q$ and ${\rm dim}(H^p(A,d))<\infty$
for all $p$. Then,
\begin{enumerate}
\item There is a quasi-isomorphism $\varphi \colon (\land V,d) \to (A,d)$,
where $(\land V,d)$ is a minimal cdga.
\item  The minimal cdga $(\land V,d)$ is unique in the following sense:
If $(\land W,d)$ is a minimal cdga and $\psi\colon (\land W,d) \to
(A,d)$ is a quasi-isomorphism, then there is an isomorphism
$f\colon (\land  V,d) \to (\land W,d)$ such that $\psi\circ f$ is
homotopic (see \cite{FHT}) to $\varphi$.
\end{enumerate}

\noindent The cdga $(\land V,d)$ is then called the \emph{minimal
model} of $(A,d)$.
\end{theorem}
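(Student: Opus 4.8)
The plan is to establish existence and uniqueness separately: I would build the minimal model by a degree-by-degree inductive construction, and then deduce uniqueness from a homotopy-lifting property together with a rigidity statement for minimal cdga's. Throughout I use the standard notion of homotopy of cdga morphisms: $f, g \colon (\land V, d) \to (A,d)$ are homotopic if there is a morphism $H \colon (\land V, d) \to (A,d) \otimes (\land(t,dt),d)$, where $|t|=0$ and $dt$ has degree $1$ with $d(t)=dt$, restricting to $f$ at $t=0$ and to $g$ at $t=1$ (see \cite{FHT}). I would prove existence first, then uniqueness.

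For existence, I would construct an increasing sequence of minimal sub-cdga's $(\land V_{(k)}, d)$ with $V_{(k)} = V^{\le k}$, together with compatible morphisms $\varphi_k \colon (\land V_{(k)}, d) \to (A,d)$, such that $H^i(\varphi_k)$ is an isomorphism for $i \le k$ and a monomorphism for $i = k+1$. Passing from $\varphi_k$ to $\varphi_{k+1}$ requires two corrections, both adjoining generators of degree $k+1$. First, to restore surjectivity in degree $k+1$, I adjoin generators with zero differential mapping to cocycles of $A$ representing a basis of $\operatorname{coker} H^{k+1}(\varphi_k)$. Second, to restore injectivity in degree $k+2$, for each class of $\ker H^{k+2}(\varphi_k)$ --- represented by a cocycle $z$ of $\land V_{(k)}$ with $\varphi_k(z) = da$ for some $a \in A$ --- I adjoin a generator $y$ with $dy = z$ and set $\varphi_{k+1}(y) = a$. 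A direct diagram chase shows $\varphi_{k+1}$ has the stated cohomological properties. Minimality is automatic: since every generator has degree $\le k+1$, any element of $\land V_{(k)}$ of degree $k+2$ is decomposable, so $dy = z \in \land^{\ge 2}V$; likewise the Sullivan condition holds with the ordering by degree, as each new differential lies in the subalgebra generated in strictly lower degrees. Taking the union over $k$ yields the quasi-isomorphism $\varphi \colon (\land V, d) \to (A,d)$. The hypotheses $H^0(A,d) = \bk$ and $\dim H^p(A,d) < \infty$ guarantee that the base step in degree $1$ and each inductive step involve only finite-dimensional choices and start from the correct $H^0$.

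For uniqueness, the essential tool is the lifting lemma: if $(\land V, d)$ is a Sullivan cdga, $\rho \colon (B,d) \to (C,d)$ is a quasi-isomorphism, and $\eta \colon (\land V, d) \to (C,d)$ is any morphism, then there is a morphism $\tilde\eta \colon (\land V, d) \to (B,d)$, unique up to homotopy, with $\rho \circ \tilde\eta$ homotopic to $\eta$; this is proved by extending $\tilde\eta$ over the well-ordered basis of $V$ one generator at a time, the obstruction at each stage vanishing because $\rho$ is a quasi-isomorphism. Applying this to the two minimal models $\varphi \colon (\land V, d) \to (A,d)$ and $\psi \colon (\land W, d) \to (A,d)$ produces $f \colon (\land V, d) \to (\land W, d)$ and $g \colon (\land W, d) \to (\land V, d)$ with $\psi \circ f \simeq \varphi$ and $\varphi \circ g \simeq \psi$. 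Since homotopic maps agree on cohomology and $\varphi, \psi$ are quasi-isomorphisms, both $f$ and $g$ are quasi-isomorphisms. It then remains to invoke the rigidity statement that a quasi-isomorphism between minimal Sullivan cdga's is an isomorphism: minimality forces the linear part of the differential to vanish on both sides, so $f$ induces an isomorphism on indecomposables $V \cong W$, and a filtration argument by word length upgrades this to an isomorphism of algebras. This produces the desired isomorphism $f$ with $\psi \circ f \simeq \varphi$.

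I expect the main obstacle to be twofold. The delicate heart is the lifting lemma, where the lift $\tilde\eta$ and the comparing homotopy must be built simultaneously by transfinite induction over the well-ordered basis, carefully managing the cylinder $(\land(t,dt),d)$ and the surjectivity of $\rho$ so as to solve the relevant cocycle equations at each stage. The second genuine difficulty is the base of the existence induction in degree $1$: since we do not assume $M$ simply connected, the space $V^1$ and the quadratic part of $d$ on higher generators must be arranged to reproduce the full cup-product structure on $H^1$, and it is precisely here that the functorial treatment of \cite{BG} or \cite[Chapter 6]{Hal} is needed. The remaining verifications --- cohomological correctness of each $\varphi_k$ and preservation of minimality --- are routine.
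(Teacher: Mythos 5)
The paper does not prove this theorem: it is quoted as a standard result, with the general path-connected case referred to \cite[Chapter 6]{Hal} and to \cite{BG}, so there is no proof of record to compare against. Your sketch is the standard textbook argument --- inductive construction of the model for existence, then the lifting lemma together with the rigidity theorem that a quasi-isomorphism of minimal Sullivan cdga's is an isomorphism for uniqueness --- and it is correct in outline. Three caveats on the details. First, the complication you attribute to the base step in degree $1$ is not confined there: whenever $V^1\neq 0$, adjoining a generator $y$ of degree $k+1$ with $dy=z$ creates new elements $y\cdot x$, $x\in V^1$, of degree $k+2$, so fresh kernel classes can appear and each inductive step must itself be iterated (a countable union at every degree, not only in degree $1$); this is exactly the ``limiting process'' the paper alludes to just before \thmref{thm:gminimal}. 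Second, the claim that the hypotheses force ``only finite-dimensional choices'' is false in the non-simply-connected setting: $\dim H^p(A,d)<\infty$ does not bound $\dim V^p$ (a wedge of two circles already has $V^1$ infinite-dimensional), so the construction genuinely requires infinitely many generators in general. Third, the rigidity statement --- that a quasi-isomorphism between minimal Sullivan cdga's with $V=V^{\geq 1}$ is an isomorphism --- is not the routine consequence of ``isomorphism on indecomposables'' that your last paragraph suggests when $V^1\neq 0$ (note that $H^1(\land V,d)=\ker(d|_{V^1})$ need not equal $V^1$, so the cohomology isomorphism does not immediately identify the generators); it is a genuine theorem requiring the word-length filtration argument in earnest, and it sits alongside the lifting lemma as the technical heart of uniqueness. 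None of these is a wrong step, but they are precisely where the cited references do the real work.
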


The connection between this type of algebra and topology is via the de Rham cdga of differential forms on the
manifold $M$, $(\Omega(M),d)$, when $\bk$ is $\mathbb R$ and Sullivan's rational polynomial forms on $M$
(thought of as a simplicial complex, say), $(A_{PL}(M),d)$, when $\bk$ is $\mathbb Q$. Applying
\thmref{thm:existuniqminmod} to these cdga's produces a \emph{minimal model of the space} $M$ denoted by
$\varphi\colon \mathcal M_M = (\land V,d) \to A$, where we let $A$ stand for either the de Rham or Sullivan
algebras. We shall not distinguish the minimal models depending on the field because the context will
always be clear. The minimal model thus provides a special type of cdga associated to a space. Note that
the condition $H^0(A,d) = \bk$ in \thmref{thm:existuniqminmod} means that any path-connected space
has a minimal model. There are two key facts that make minimal cdga's an important tool.

\begin{lemma}\label{lem:liftinglem}\hfill\newline
\vspace{-10pt}
\begin{enumerate}
\item If $f \colon (\land V,d) \to (\land Z,d)$ is a quasi-isomorphism
between minimal cdga's, then $f$ is an isomorphism.
\item For a Sullivan cdga $(\land V,d)$, a cdga quasi-isomorphism $f \colon (A,d) \to (B,d)$
and a cdga morphism $\varphi \colon (\land V,d) \to (B,d)$, there is a cdga morphism
$\psi \colon (\land V,d) \to (A,d)$ such that $f\circ \psi$
is homotopic (see \cite{FHT}) to $\varphi$.
\[
\xymatrix{
& (A,d)\ar[d]^f\\
(\land V,d)\ar[ur]^{\psi}\ar[r]_{\varphi} & (B,d)}
\]
\end{enumerate}
\end{lemma}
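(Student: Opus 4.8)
The two assertions are the classical \emph{Whitehead theorem} and \emph{lifting lemma} for Sullivan algebras, and the plan is to prove them by the standard obstruction-theoretic machinery attached to the word-length and Sullivan filtrations (see \cite{FHT}). For both parts the organizing tool is the \emph{linear part} $Q(f)\colon V\to Z$ of a morphism $f\colon(\land V,d)\to(\land Z,d)$, obtained by following $f|_V$ with the projection $\land Z\to Z=\land^1 Z$. Minimality, i.e. $dV\subset\land^{\geq 2}V$ and $dZ\subset\land^{\geq 2}Z$, guarantees that $Q(f)$ is a chain map for the zero differentials induced on indecomposables, that $Q$ is functorial, and that it is invariant under homotopy of morphisms.

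For part (1), I would first reduce the conclusion ``$f$ is an isomorphism'' to ``$Q(f)$ is an isomorphism.'' Since each generator of $V$ maps under $f$ into $\land^{\geq 1}Z$, the morphism $f$ respects the decreasing word-length filtration $\land^{\geq p}$, and the map it induces on the associated graded is the free extension $\land Q(f)$; as the filtration is finite in each fixed degree, a routine comparison on associated graded shows $f$ is an isomorphism as soon as $Q(f)$ is. It then remains to invert on linear parts, and the cleanest route is to bootstrap from part (2), whose proof below is independent of part (1). Applying the lifting lemma to the quasi-isomorphism $f$ and the identity of $\land Z$ yields $\psi\colon\land Z\to\land V$ with $f\psi\simeq\mathrm{id}$; since $H(f)$ is an isomorphism, $H(\psi)$ is its inverse and $\psi$ is again a quasi-isomorphism, so a second application produces a homotopy right inverse for $\psi$. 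Passing to linear parts and using functoriality and homotopy invariance of $Q$, one finds that $Q(f)$ and $Q(\psi)$ are mutually inverse, whence $Q(f)$, and therefore $f$, is an isomorphism.

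For part (2), I would treat first the case where $f\colon(A,d)\to(B,d)$ is a \emph{surjective} quasi-isomorphism, in which case its kernel is acyclic. Well-ordering a basis $\{x_\alpha\}$ of $V$ with $dx_\alpha\in\land(x_\beta)_{\beta<\alpha}$, I would define $\psi(x_\alpha)$ by transfinite induction so that $f\psi=\varphi$ holds on the nose: at stage $\alpha$ the element $z=\psi(dx_\alpha)$ is a cocycle of $A$ whose image $f(z)=d\varphi(x_\alpha)$ is exact, so injectivity of $H(f)$ gives $z=da_0$; the difference $\varphi(x_\alpha)-f(a_0)$ is then a cocycle of $B$, which acyclicity of the kernel together with surjectivity of $f$ allows one to lift to a cocycle $c\in A$, and $\psi(x_\alpha):=a_0+c$ satisfies $d\psi(x_\alpha)=z$ and $f\psi(x_\alpha)=\varphi(x_\alpha)$. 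For a general quasi-isomorphism $f$ I would pass to the mapping path object $P_f=\{(a,\beta)\in A\times(B\otimes\land(t,dt)):f(a)=\beta|_{t=0}\}$; evaluation at $t=1$ is a surjective quasi-isomorphism $P_f\to B$, through which $\varphi$ lifts by the first case, while projection $P_f\to A$ supplies the desired $\psi$, the interpolating path furnishing the homotopy $f\psi\simeq\varphi$.

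The main obstacle is the inductive step in the surjective case of part (2): it is exactly there that one must solve simultaneously the cocycle equation $d\psi(x_\alpha)=\psi(dx_\alpha)$ and the lifting equation $f\psi(x_\alpha)=\varphi(x_\alpha)$, and this succeeds only because a surjective quasi-isomorphism has acyclic kernel, so that a cocycle of $B$ can be lifted to a cocycle of $A$. Everything else—the filtration argument of part (1), the reduction of an arbitrary quasi-isomorphism to a surjective one via $P_f$, and the homotopy invariance of the linear part—is formal once this single step is in hand.
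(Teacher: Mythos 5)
The paper does not prove this lemma at all: it is quoted as standard background, with \cite{FHT} for the general theory and \cite{Hal} and \cite{BG} cited explicitly for the non-simply-connected case, so there is no in-paper argument to compare yours against. Your proof of part (2) is the standard one and is correct: the surjective case by transfinite induction over a Sullivan basis (using acyclicity of the kernel to lift a cocycle of $B$ to a cocycle of $A$), and the general case via the path object $P_f$, with the second component of the lift supplying the homotopy. Nothing to object to there.

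Part (1), however, has a genuine gap at the step ``passing to linear parts and using \dots homotopy invariance of $Q$.'' Homotopy invariance of the linear part fails for minimal Sullivan algebras with $V^1\neq 0$ --- which is exactly the generality the paper needs, since it applies the lemma to models of non-simply-connected spaces such as co-K\"ahler manifolds. Concretely, let $\land V=\land(u,v,x)$ with $|u|=|v|=|x|=1$, $du=dv=0$, $dx=uv$ (the minimal model of the Heisenberg nilmanifold), and define $\Phi\colon \land V\to \land V\otimes\land(t,dt)$ by $\Phi(u)=u\otimes 1+c\,(1\otimes dt)$, $\Phi(v)=v\otimes 1$, $\Phi(x)=x\otimes 1+c\,(v\otimes t)$; one checks $d\Phi=\Phi d$, and $\Phi$ is a homotopy from the identity to the automorphism $\varphi_1$ with $\varphi_1(x)=x+cv$. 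Thus $\varphi_0\simeq\varphi_1$ but $Q(\varphi_0)\neq Q(\varphi_1)$: the culprit is precisely the component of $\Phi(u)$ in $\land^0V\otimes\land^1(t,dt)$, which your own analysis would have to exclude and cannot. Consequently, from $f\psi\simeq\mathrm{id}$ you may not conclude $Q(f)Q(\psi)=\mathrm{id}$, and the bootstrap collapses. The reduction ``$Q(f)$ iso $\Rightarrow f$ iso'' via the word-length filtration is fine, and the statement of part (1) is true in this generality, but establishing that $Q(f)$ is an isomorphism when $V^1\neq 0$ requires the finer argument using the Sullivan filtration $V(0)\subset V(1)\subset\cdots$ (showing the discrepancy between linear parts of homotopic maps lowers this filtration, hence is ``unipotent''), as in \cite[Chapter 6]{Hal} or \cite{BG}. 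In the simply connected case $V^1=0$ your argument is complete as written.
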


Here is one application. Say that the spaces $X$ and $Y$ have \emph{the same rational homotopy type}
if there is a finite chain of maps $X \to Y_1 \leftarrow Y_2 \to \cdots \to Y$ such that each induced
map in rational cohomology is an isomorphism. If we consider the cdga morphisms
$$\mathcal M_{Y_1} \to A_{PL}(Y_1) \to A_{PL}(X) \leftarrow \mathcal M_X$$
and apply (2) of \lemref{lem:liftinglem}, we obtain a cdga morphism $\mathcal M_{Y_1} \to \mathcal M_X$
which is a quasi-isomorphism (since the other morphisms are). By (1) of \lemref{lem:liftinglem},
we then have $\mathcal M_{Y_1} \cong \mathcal M_X$. We carry on this process through the chain of maps
to get $\mathcal M_{Y} \cong \mathcal M_X$.

\begin{proposition}\label{prop:rhtype}
If $X$ and $Y$ have the same rational homotopy type, then their minimal models are isomorphic.
Moreover, if $X$ and $Y$ are nilpotent spaces (e.g. simply connected), then the converse is true.
\end{proposition}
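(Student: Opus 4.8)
The forward implication is, in effect, already carried out in the paragraph preceding the statement, so I would simply organize that argument into a clean induction. The plan is to reduce to a single map $f\colon X \to Y$ inducing a rational cohomology isomorphism and then lift. Such an $f$ makes $A_{PL}(f)\colon A_{PL}(Y) \to A_{PL}(X)$ a quasi-isomorphism; applying part (2) of \lemref{lem:liftinglem} to the Sullivan cdga $\mathcal M_Y$, the minimal-model quasi-isomorphism $\varphi_X\colon \mathcal M_X \to A_{PL}(X)$, and the composite $A_{PL}(f)\circ\varphi_Y\colon \mathcal M_Y \to A_{PL}(X)$, I obtain a morphism $\psi\colon \mathcal M_Y \to \mathcal M_X$ with $\varphi_X\circ\psi$ homotopic to $A_{PL}(f)\circ\varphi_Y$. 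A short diagram chase on cohomology, using that $\varphi_X$, $\varphi_Y$, and $A_{PL}(f)$ all induce isomorphisms, shows $\psi$ is a quasi-isomorphism, hence an isomorphism by part (1) of \lemref{lem:liftinglem}. Iterating along the finite zig-zag defining ``same rational homotopy type'' then gives $\mathcal M_X \cong \mathcal M_Y$.

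For the converse I would pass through geometric realization. The key external input is Sullivan's spatial realization functor, which assigns to a minimal cdga of finite type a space, together with the fundamental fact that for a nilpotent space $X$ of finite type the realization of $\mathcal M_X$ is a model for the rationalization $X_{\mathbb Q}$, and that the rationalization map $r_X\colon X \to X_{\mathbb Q}$ induces an isomorphism on rational cohomology. This is precisely where the nilpotency hypothesis is needed: rationalization and this identification are available only in the nilpotent setting. Given an isomorphism $\mathcal M_X \cong \mathcal M_Y$, I would apply the realization functor to obtain a homotopy equivalence of rationalizations $X_{\mathbb Q} \simeq Y_{\mathbb Q}$.

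It then remains to assemble the zig-zag
$$X \xrightarrow{r_X} X_{\mathbb Q} \xrightarrow{\simeq} Y_{\mathbb Q} \xleftarrow{r_Y} Y,$$
whose outer maps are the rationalizations and whose middle arrow is the equivalence just produced; each induces an isomorphism on rational cohomology, so $X$ and $Y$ have the same rational homotopy type. I expect the entire difficulty to lie in this converse. The forward direction is formal, using only the lifting lemma, whereas the converse cannot be done by pure algebra: it requires the geometric realization functor and the rationalization theory of nilpotent spaces, and it is exactly the identification of the realized minimal model with $X_{\mathbb Q}$ --- valid only under the nilpotency hypothesis --- that makes that hypothesis indispensable.
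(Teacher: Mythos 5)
Your proof is correct and follows essentially the same route as the paper: the forward direction is exactly the lifting-lemma argument the authors give in the paragraph preceding the proposition (lift through the zig-zag, conclude with part (1) of \lemref{lem:liftinglem}), and the converse is the same appeal to Sullivan's spatial realization and the rationalization theory of nilpotent spaces that the paper invokes. Your write-up is in fact somewhat more explicit about the converse than the paper, which only cites the existence of spatial rationalizations without assembling the zig-zag.
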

The second statement follows from the existence of spatial rationalizations coming from homotopical
localization theory. In general, these do not exist for non-nilpotent spaces. This is important to
note because compact co-K\"ahler manifolds are rarely nilpotent spaces (they are never simply connected of
course). So, in the case of non-nilpotent spaces such as typical co-K\"ahler manifolds, it
is the isomorphism class of the minimal model that really represents some sort of rational type.
Of course, everything we have said applies to models over $\mathbb R$ as well.

Some minimal models are even more special; they are isomorphic to the minimal models associated to
the cohomology algebra (considered as a cdga with zero differential). Spaces with this property
are called \emph{formal}. \lemref{lem:liftinglem} implies that there is the following equivalent
definition.

\begin{definition}\label{def:formal}
A space $X$, with minimal model $(\land V,d)$, is called \textbf{formal} if there is a quasi-isomorphism
$$\theta \colon (\land V,d) \to (H^*(X;\bQ),0)\,.$$
\end{definition}

\begin{remark}\label{rem:formal}
We can also define a cdga $(A,d)$ to be \emph{formal}
if there is a chain of quasi-isomorphisms
$$(A,d) \leftarrow (B_1,d_1) \rightarrow\cdots (B_k,d_k) \rightarrow (H^*(A),0)\,.$$
We can take the minimal models of $(A,d)$, the minimal models  of the $(B_i,d_i)$ and the minimal
models of the morphisms and apply \lemref{lem:liftinglem} to see that this is equivalent to
\defref{def:formal}.
\end{remark}

The last piece of Rational Homotopy Theory that we shall need is the notion of an equivariant
minimal model. Let $\Gamma$ be a finite group. A $\Gamma$-cdga is a cdga
on which the group $\Gamma$ acts by a homomorphism $\Gamma \to \mbox{aut}_{cdga} (A,d_A)$.

\begin{definition}\label{def:gminimalmodel}
A $\Gamma$-cdga $(A,d_A)$ is called $\Gamma$-\textbf{minimal} if $(A,d_A) = (\land V,d)$ with
\begin{enumerate}
\item $d(V) \subset \land^{\geq 2} (V)$;
\item Each $V^n$ is a $\Gamma$-module (i.e. this gives a
$\Gamma$-structure to $\land V$);
\item $d$ is $\Gamma$-equivariant: $d(ga)=gd(a)$;
\item $V$ admits a filtration by sub $\Gamma$-spaces
\[0 \subset V(0) \subset V(1) \subset\cdots \subset V(n) \subset
\cdots \subset V = \cup_n V(n)\,,\]
with $d(V(n)) \subset (\land V(n-1))$.
\end{enumerate}
\end{definition}
\noindent Generalizing the non-equivariant case, we have the following. Note that,
while all proofs (e.g. \cite[Theorem 3.26]{FOT}) of this result assume $H^1(A,d_A)=0$,
this is for convenience only. In the same way that the ordinary minimal model
can be constructed for general path-connected spaces (see \thmref{thm:existuniqminmod})
by a limiting process, we can also construct an equivariant model.

\begin{theorem}\label{thm:gminimal}
Let $(A,d_A)$ be a $\Gamma$-cdga. Suppose that $H^0(A,d_A) = \bk$, where
$\bk=\R$, or $\bk=\bQ$.
Then there exists a $\Gamma$-minimal algebra
$(\land V,d)$ and a $\Gamma$-equivariant quasi-isomorphism
$\varphi \colon (\land V,d) \to (A,d_A)$. The $\Gamma$-minimal
algebra $(\land V,d)$ is called the $\Gamma$-minimal model of the
$\Gamma$-cdga $(A,d_A)$, and it is unique up to $\Gamma$-isomorphism.
\end{theorem}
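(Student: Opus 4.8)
The plan is to mimic the inductive construction of the ordinary minimal model (\thmref{thm:existuniqminmod}), carrying the $\Gamma$-action along at every stage so that each newly adjoined generating space is a $\Gamma$-module and the differential remains $\Gamma$-equivariant. First I would build $V$ degree by degree (and within a degree, by the filtration of Definition~\ref{def:gminimalmodel}(4)) so as to kill the cokernel of $H^*(\varphi)$ and the kernel of $H^*(\varphi)$ in a way that respects the $\Gamma$-structure. The key observation making this possible is that $\Gamma$ is \emph{finite} and we work over a field $\bk$ of characteristic zero, so Maschke's theorem applies: every short exact sequence of $\bk[\Gamma]$-modules splits, and in particular one can always choose $\Gamma$-equivariant sections and $\Gamma$-invariant complements. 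This is what upgrades the usual vector-space choices in the minimal model construction to equivariant ones.

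Concretely, I would set up the induction on $n$ so that at stage $n$ we have a $\Gamma$-equivariant morphism $\varphi_n\colon(\land V(n),d)\to(A,d_A)$ inducing an isomorphism on $H^{\le n}$ and an injection on $H^{n+1}$. To pass to stage $n+1$, first adjoin generators in degree $n+1$ to hit the cokernel of $H^{n+1}(\varphi_n)$: choose a $\Gamma$-equivariant splitting of the surjection $(\mathrm{coker}) \to 0$, i.e.\ realize the relevant quotient $\Gamma$-module as a subspace, declare the new generators to form that $\Gamma$-module with $d=0$ on them, and define $\varphi_{n+1}$ on them by a $\Gamma$-equivariant choice of cocycle representatives (possible by averaging over $\Gamma$, or equivalently by picking a $\Gamma$-module section into the $\Gamma$-module of cocycles). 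Second, adjoin generators that kill the kernel of $H^{n+2}(\varphi_n)$: here each new generator sits in degree $n+1$, its differential is a $\Gamma$-equivariant map into the already-constructed $\land V(n)$ chosen so that $d$ of the new generator maps to a cocycle representing the class to be killed, and one uses equivariance of the relevant connecting maps together with Maschke to make these choices $\Gamma$-linear and to split off a $\Gamma$-complement. Because $\dim H^p(A)<\infty$ is not assumed here, as in the path-connected case one runs this as a transfinite/limiting process, taking $V=\bigcup_n V(n)$; the filtration condition (4) is satisfied by construction.

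For uniqueness up to $\Gamma$-isomorphism, I would run the equivariant analogue of the lifting lemma (\lemref{lem:liftinglem}): given two $\Gamma$-minimal models $\varphi\colon(\land V,d)\to(A,d_A)$ and $\psi\colon(\land W,d)\to(A,d_A)$, the $\Gamma$-Sullivan property lets one lift $\psi$ through $\varphi$ (up to $\Gamma$-homotopy) to a $\Gamma$-equivariant morphism $f\colon(\land W,d)\to(\land V,d)$, which is a $\Gamma$-equivariant quasi-isomorphism between $\Gamma$-minimal algebras and hence a $\Gamma$-isomorphism. The equivariant lifting in turn rests on the same finite-group averaging: one solves the lifting problem degree by degree, and at each step the obstruction lives in a $\bk[\Gamma]$-module whose relevant exact sequence splits equivariantly, so the non-equivariant choice can be promoted to a $\Gamma$-equivariant one.

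The main obstacle I expect is purely bookkeeping rather than conceptual: verifying that the equivariant homotopies used in the uniqueness argument can themselves be chosen $\Gamma$-equivariantly and that they compose correctly through the chain of lifts, since $\Gamma$-homotopy of morphisms of $\Gamma$-cdga's must be set up carefully (via a $\Gamma$-equivariant cylinder object) before the standard homotopy-lifting machinery can be invoked. Every genuinely algebraic choice reduces, via Maschke's theorem, to an equivariant splitting that is automatic in characteristic zero with $\Gamma$ finite, so no new ideas beyond the classical construction are needed; the content is in checking that each classical step is $\Gamma$-natural.
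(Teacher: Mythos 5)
Your proposal matches the argument the paper relies on: the paper does not prove Theorem~\ref{thm:gminimal} itself but cites \cite[Theorem 3.26]{FOT} together with the remark that the $H^1=0$ hypothesis is removable by the same limiting process used for ordinary minimal models of path-connected spaces, and that cited proof is precisely your inductive construction with every vector-space choice upgraded to a $\bk[\Gamma]$-module choice via Maschke's theorem and averaging over the finite group. Your treatment of uniqueness via an equivariant lifting lemma is likewise the standard route, so there is no gap to report.
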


Suppose a finite group $\Gamma$ acts on the space $X$. If the
$\Gamma$-equivariant minimal model of $X$, $(\land V,d)$, is
equivariantly isomorphic to the $\Gamma$-equivariant minimal
model of $H^*(X;\bk)$, then we say that $(X,\Gamma)$ is
$\Gamma$-\emph{formal}. It can be shown that a formal $\Gamma$-space
is $\Gamma$-formal \cite{Pap}. That is, if $X$ is a formal space with an action
of a finite group $\Gamma$, then the equivariant minimal model can be constructed
from the action of $\Gamma$ on $H^*(X;\bk)$. Moreover, in this situation,
we can show that the minimal model of $X/\Gamma$ is the minimal
model of $H^*(X/\Gamma;\bk)$, so that $X/\Gamma$ is formal. To see
this, let $\phi\colon (\land W,d) \to (\land V,d)^\Gamma$ be the
minimal model of $(\land V,d)^\Gamma$. By computing the invariant part of cohomology,
we know that $(\land W,d)$ is the minimal model of $X/\Gamma$ (see \cite[Corollary 3.29]{FOT}).
Now consider the commutative diagram below, where the right square comes
from the inclusion of invariant elements and the equivariant formality
quasi-isomorphism $\theta$, and the left square
comes from lifting the composition $(\land W,d) \stackrel{\phi}{\to}
(\land V,d)^\Gamma \stackrel{\theta^\Gamma}{\to} H^*(X;\bk)^\Gamma$
through the isomorphism $H^*(X/\Gamma;\bk) \cong H^*(X;\bk)^\Gamma$.

\[\xymatrix{
(\land W,d) \ar[r]^-\phi \ar[d] & (\land V,d)^\Gamma \ar[r]
\ar[d]^-{\theta^\Gamma} &
(\land V,d) \ar[d]^-\theta \\
H^*(X/\Gamma;\bk) \ar[r]^-\cong & H^*(X;\bk)^\Gamma \ar[r] & H^*(X;\bk)     }
\]
Then, since $\theta$ is a quasi-isomorphism, so is $\theta^\Gamma$. But then the lift
$(\land W,d) \to H^*(X/\Gamma;\bk)$ is also a quasi-isomorphism. Hence,
$X/\Gamma$ is formal if $X$ is.

Let $(M,J,\xi,\eta,g)$ be a compact co-K\"ahler manifold and let $(K,G)$ be a presentation. Let
$(\cM_K,d)$ denote the minimal model of $K$. Then its invariant part is a rational model for the space $K/G$.
A main result of \cite{DGMS} states that compact K\"ahler manifolds are formal. This means, among other
things, that the minimal model of a K\"ahler manifold $K$ is determined by its rational
cohomology algebra $H^*(K;\bQ)$. (Also, note that formality does not depend on the field
$\bk$.) Since $K$ is a formal space, so is $K/G$, and hence its minimal
model can be computed from the cohomology algebra $H^*(K/G;\bQ)\cong H^*(K;\bQ)^G$. Furthermore,
co-K\"ahler manifolds are also formal (see \cite{CdLM}), so the rational minimal model of $M$ can
be constructed directly from its rational cohomology, which in view of \thmref{thm:betti}, is isomorphic to $H^*(K;\bQ)^G\otimes H^*(S^1;\bQ)$, for any presentation $(K,G)$ of $M$.
Putting this together, we obtain the following result.

\begin{theorem}\label{minimal_orbifold}
Let $M$ be a compact co-K\"ahler manifold and let $(K,G)$ be a presentation. Then the minimal
model of $M$ has the following cdga splitting:
\[\cM_M \cong \cM_{K/G} \otimes \cM_{S^1}.\]
\end{theorem}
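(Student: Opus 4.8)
The plan is to exploit formality at every stage, reducing each minimal model in sight to the minimal model of the relevant cohomology algebra, and then to invoke the fact that minimal models pass through tensor products. First I would recall that, for a chosen presentation $(K,G)$, \thmref{thm:betti} gives an isomorphism of commutative graded algebras
$$H^*(M;\bQ)\cong H^*(K;\bQ)^G\otimes H^*(S^1;\bQ)\cong H^*(K/G;\bQ)\otimes H^*(S^1;\bQ),$$
the second identification using $H^*(K/G;\bQ)\cong H^*(K;\bQ)^G$. Since $M$ is formal (by \cite{CdLM}), $\cM_M$ is the minimal model of $(H^*(M;\bQ),0)$; since $K/G$ is formal (as established above from the formality of the K\"ahler manifold $K$), $\cM_{K/G}$ is the minimal model of $(H^*(K/G;\bQ),0)$; and $S^1$ is formal, with $\cM_{S^1}=(\land(t),0)$, $|t|=1$, the minimal model of $(H^*(S^1;\bQ),0)$.

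The key algebraic input is that the minimal model of a tensor product of cdga's is the tensor product of their minimal models, which I would verify directly. If $\varphi_i\colon(\land V_i,d_i)\to(A_i,d_{A_i})$ are minimal models for $i=1,2$, then $(\land V_1,d_1)\otimes(\land V_2,d_2)=(\land(V_1\oplus V_2),d_1\otimes 1+1\otimes d_2)$ is again free commutative, and its differential carries $V_1\oplus V_2$ into $\land^{\geq 2}(V_1\oplus V_2)$ because each $d_i(V_i)\subset\land^{\geq 2}V_i$; hence this cdga is minimal. Moreover $\varphi_1\otimes\varphi_2$ is a quasi-isomorphism: over a field, the K\"unneth theorem identifies $H^*\big((\land V_1,d_1)\otimes(\land V_2,d_2)\big)\cong H^*(\land V_1)\otimes H^*(\land V_2)$ and $H^*(A_1\otimes A_2)\cong H^*(A_1)\otimes H^*(A_2)$, under which $\varphi_1\otimes\varphi_2$ induces $H^*(\varphi_1)\otimes H^*(\varphi_2)$, an isomorphism.

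Applying this with $A_1=(H^*(K/G;\bQ),0)$ and $A_2=(H^*(S^1;\bQ),0)$ shows that $\cM_{K/G}\otimes\cM_{S^1}$ is a minimal model for
$$(H^*(K/G;\bQ),0)\otimes(H^*(S^1;\bQ),0)=(H^*(K/G;\bQ)\otimes H^*(S^1;\bQ),0)\cong(H^*(M;\bQ),0).$$
By formality of $M$, this same cdga also admits $\cM_M$ as a minimal model, so uniqueness of the minimal model (\thmref{thm:existuniqminmod}) forces $\cM_M\cong\cM_{K/G}\otimes\cM_{S^1}$.

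I expect the only delicate point to be the justification that tensoring minimal models yields a minimal model --- specifically the quasi-isomorphism claim, which rests on K\"unneth and hence on working over a field of characteristic zero. I would also take care that $M$, $K/G$ and $S^1$ are merely path-connected, not nilpotent, so that the versions of \thmref{thm:existuniqminmod} and of formality invoked are those valid for general path-connected spaces; this is exactly why the whole argument is conducted at the level of cdga's and their minimal models rather than via rationalizations.
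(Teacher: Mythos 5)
Your proposal is correct and follows essentially the same route as the paper: formality of $M$, $K/G$, and $S^1$ reduces everything to the cohomology isomorphism $H^*(M;\bQ)\cong H^*(K;\bQ)^G\otimes H^*(S^1;\bQ)$ of \thmref{thm:betti}, after which the minimal models are matched up by uniqueness/lifting. The only difference is cosmetic: the paper organizes this as a lifting diagram and an appeal to \lemref{lem:liftinglem}, while you invoke uniqueness of the minimal model directly, explicitly verifying (via K\"unneth) that a tensor product of minimal models is a minimal model of the tensor product --- a point the paper leaves implicit.
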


\begin{proof}
Because all spaces are formal, we have the following diagram:
\[\xymatrix{
\cM_M \ar[d]_-{\theta_M} \ar@{-->}[r]^-\phi & \cM_{K/G} \otimes \cM_{S^1} \ar[d]^-{\theta_{K/G} \otimes \theta_{S^1}} \\
H^*(M;\bQ) \ar[r]^-\cong & H^*(K;\bQ)^G \otimes H^*(S^1;\bQ),
}
\]
where the top arrow comes from \lemref{lem:liftinglem}. Also by \lemref{lem:liftinglem}, we see that
$\phi$ is an isomorphism.
\end{proof}

\thmref{minimal_orbifold} is quite interesting, in the following sense. Since a compact 
co-K\"ahler manifold $(M,J,\xi,\eta,g)$ is never simply connected, when its fundamental group 
$\pi_1(M)$ is not nilpotent, or acts non-nilpotently on higher homotopy groups (see \cite{FOT}), 
the minimal model of $M$ does not give, in general, information about the usual rational homotopy 
structure of $M$. For instance, we don't see things such as rational homotopy groups and Whitehead products. 
However, the isomorphism class of a minimal model is always an invariant attached to any space,
so \thmref{minimal_orbifold} says that inside the minimal model of $M$ we can see the 
``auxiliary'' K\"ahler orbifold $K/G$ (i.e. its minimal model). So the minimal model provides
a new type of (geometric) information that is non-classical.

\begin{example}
Here is another description of the Chinea-de Le\'on-Marrero example contained in \cite{CdLM}.
Consider the torus $T^2$ with its standard K\"ahler structure and let $\phi\colon T^2\to T^2$ be the
holomorphic isometry covered by the linear transformation $A\colon \bR^2\to\bR^2$,
\[
A=\begin{pmatrix}
0 & -1\\
1 & 0
\end{pmatrix}.
\]
The Betti numbers of the mapping torus $T^2_\phi$ are easily computed to be the following:
\begin{itemize}
\item $b_0(T^2_\phi)=b_3(T^2_\phi)=1$;
\item $b_1(T^2_\phi)=1$, generated by the volume form of the circle $S^1$;
\item $b_2(T^2_\phi)=1$, generated by the K\"ahler class of the torus $T^2$.
\end{itemize}
The minimal model of $T^2_\phi$ is
\[
(\wedge(t,u,v),|t|=1, |u|=2, |v|=3, dv=u^2),
\]
which is isomorphic to the minimal model of $S^2\times S^1$. The automorphism $\phi$ of $T^2$ has
order 4 and $M$ can be seen as the quotient of $T^2\times S^1$ by the $\bZ_4$-action given by
\[
(x,y,z)\mapsto (y,-x,z+1/4).
\]
Now consider the quotient $T^2/G$. When we think of $T^2$ as the square $[0,1]\times[0,1]$ with the
sides identified, the action of $G$ on $T^2$ is a rotation of $\pi/2$ around the center of the square.
There are therefore 2 fixed points, $(0,0)$ and $(\frac{1}{2},\frac{1}{2})$. Using the Riemann-Hurwitz
formula, one sees that the quotient $T^2/G$ is a compact surface of genus 0, hence topologically a
sphere $S^2$.
\end{example}

\begin{example}
Take two copies $(\bC P^1_i,\o_i)$, $i=1,2$, of $\bC P^1$ with its standard K\"ahler structure, and
consider the manifold $K=\bC P^1\times\bC P^1$ endowed with K\"ahler structure $\o=\o_1+\o_2$.
Let $\phi\colon K\to K$ denote the map $\phi(p,q)=(q,p)$. Then $\phi$ is a holomorphic isometry
of $K$. The rational cohomology of the co-K\"ahler manifold $N=K_\phi$ is:
\begin{itemize}
\item $H^1(K_\phi;\bQ)=\langle [u]\rangle$, generated by the class of the circle $S^1$;
\item $H^2(K_\phi;\bQ)=\langle [\o]\rangle$, generated by the K\"ahler class of $K$;
\item $H^3(K_\phi;\bQ)=\langle [\o\wedge u]\rangle$;
\item $H^4(K_\phi;\bQ)=\langle [\o^2]\rangle$;
\item $H^5(K_\phi;\bQ)=\langle [\o^2\wedge u]\rangle$.
\end{itemize}
\end{example}
The minimal model of $K_\phi$ is
\[
(\wedge(t,u,v),|t|=1, |u|=2, |v|=5, dv=u^3),
\]
which is isomorphic to the minimal model of $\bC P^2\times S^1$. The automorphism $\phi$ of $K$ has
order 2 and $M$ can be seen as the quotient of $K\times S^1$ by the $\bZ_2$-action given by
\[
(p,q,t)\mapsto (q,p,t+1/2).
\]
It is not hard to see that the quotient $K/\bZ_2$ is smooth, and isomorphic (as algebraic varieties)
to $\bC P^2$. Indeed, let $D=(p,p)\subset K$ be the diagonal; the Segre map gives an embedding
$\imath\colon K\to\bC P^3$ which realizes $K$ as a smooth quadric $\mathcal{Q}$.
The projection from $\mathcal{Q}$ to a plane $\pi\subset \bC P^3$ is a $2:1$ cover, branched over the
conic $\mathcal{C}\subset \pi$ which is the image under the projection of $\imath(D)$. Therefore, the quotient $\mathcal{Q}/\bZ^2$ is precisely $\pi\cong\bC P^2$.

\begin{remark}
It is worth pointing out here that in neither of the two examples does the minimal model compute typical
rational homotopy information (beyond cohomology) about the corresponding K\"ahler mapping torus.
Indeed, in the first case, $T^2_\phi$ is an aspherical manifold, as can be seen directly from the long exact
sequence of homotopy groups of the fibration $T^2\to T^2_\phi\to S^1$, but the minimal model of $T^2_\phi$ has
generators in degree 2 and 3. In the second case, by the same method one sees that $\pi_2(K_\phi)=\bZ\oplus\bZ$,
but the minimal model of $K_\phi$ has only one generator in degree 2. In both cases, the reason for this 
apparent mis-match is that neither $T^2_\phi$ nor $K_\phi$ are nilpotent spaces.
\end{remark}

\section{Toral Rank of Co-K\"ahler Manifolds}\label{sec:toralrank}
The \emph{Toral Rank Conjecture} (TRC), due to Halperin \cite{Hal85}, has been a very influential and
motivating problem in the development of Rational Homotopy Theory. In this section we show that
a co-K{\"a}hler manifold satisfies the conjecture.  This is again an instance of our principle that
co-K\"ahler manifolds inherit properties from their constituent K\"ahler manifolds since the TRC
has long been known in the K\"ahler case (see \cite{AP,LO}). Before we state the conjecture, recall that
a compact Lie group $G$ (continuously) acts \emph{almost freely} on a space $X$ if all isotropy groups
are finite. The \emph{toral rank} of a space $X$, $\rk(X)$, is the dimension of the largest torus that can act
almost freely on $X$.

\begin{conjecture}[Toral Rank Conjecture]\label{conj: TRC}
If the toral rank of a space $X$ is $r$, then
\[\mathrm{dim}\  H^*(X;\bQ)  \geq 2^r.\]
\end{conjecture}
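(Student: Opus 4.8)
The plan is to establish the Toral Rank Conjecture for a compact co-K\"ahler manifold $M$ by exploiting the product-up-to-finite-cover structure provided by \thmref{thm:cosympsplit} together with the cohomological splitting \eqref{G_invariant} and the known validity of the TRC in the K\"ahler case. The key observation is that the TRC is a statement purely about $\dim H^*(M;\bQ)$ and the toral rank $\rk(M)$, and that both quantities can be controlled through the presentation $(K,G)$.

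\medskip

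First I would analyze how an almost free torus action on $M$ interacts with the finite cover $K\times S^1\to M$ and the fibration $K\to M\to S^1$. Suppose $T^r$ acts almost freely on $M$ with $r=\rk(M)$. The goal is to transfer this action (or its rank) to the K\"ahler manifold $K$, or more precisely to the product $K\times S^1$, so that one reduces to understanding almost free torus actions on a K\"ahler manifold. Since $M$ fibres over $S^1$ and the $S^1$ factor contributes a free circle's worth of ``room,'' I expect that an $r$-torus acting almost freely on $M$ forces an $(r-1)$-torus to act almost freely on the K\"ahler fibre $K$ (intuitively, the $\eta$-direction / mapping-torus circle absorbs one dimension of the torus, and this is consistent with the Betti-number doubling coming from the $H^*(S^1;\bQ)$ tensor factor). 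Establishing this rank inequality $\rk(K)\geq \rk(M)-1$ is the crux; it should follow by lifting the torus action to the finite cover $K\times S^1$ (a finite cover preserves the property of being almost free and does not change the dimension of the acting torus, since isotropy groups only change by finite index) and then projecting to $K$ after splitting off the translation action on the circle factor.

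\medskip

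Once I have $\rk(K)\geq \rk(M)-1$, I would invoke the known K\"ahler case of the TRC (cited as \cite{AP,LO}) to conclude $\dim H^*(K;\bQ)\geq 2^{\rk(K)}\geq 2^{\rk(M)-1}$. The final step combines this with the cohomology computation: by \thmref{thm:betti} and \eqref{G_invariant} we have
$$
\dim H^*(M;\bQ) = 2\,\dim H^*(K;\bQ)^G,
$$
the factor of $2$ coming from the $H^*(S^1;\bQ)$ tensor factor. The subtlety is that the K\"ahler TRC bounds $\dim H^*(K;\bQ)$, whereas what appears in $M$ is the invariant part $\dim H^*(K;\bQ)^G$, which is smaller. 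To make the argument go through I would need a version of the K\"ahler TRC applied directly to the orbifold $K/G$, whose rational cohomology is $H^*(K;\bQ)^G$, so that $\dim H^*(K;\bQ)^G\geq 2^{\rk(K/G)}$. Assembling these pieces gives $\dim H^*(M;\bQ)\geq 2\cdot 2^{\rk(M)-1}=2^{\rk(M)}$, as desired.

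\medskip

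The main obstacle I anticipate is precisely the interaction between the finite group $G$ and the torus action, i.e. correctly passing from the TRC on $K$ to a statement about the $G$-invariant cohomology appearing in $M$. A clean way to handle this is to work throughout with the almost free action lifted to $K\times S^1$, show directly that the torus rank available there is at least $\rk(M)$ with one dimension accounted for by the circle, and apply the K\"ahler TRC to $K$ together with the fact that the relevant invariant cohomology still contains enough classes. If a direct orbifold/equivariant version of the K\"ahler TRC is not readily citable, the fallback is to use the formality and the explicit splitting $\cM_M\cong \cM_{K/G}\otimes\cM_{S^1}$ from \thmref{minimal_orbifold} to reduce the rational-cohomology lower bound to the orbifold factor, where the transversally K\"ahler (cohomologically K\"ahlerian, by \propref{prop:cohomolKaehler}) structure guarantees the hard Lefschetz property and hence the doubling estimates that underlie the TRC proof in the K\"ahler setting.
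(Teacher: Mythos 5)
Your proposal has a genuine gap at its crux. The step you yourself identify as essential --- establishing $\rk(K)\geq \rk(M)-1$ by lifting an almost free $T^r$-action from $M$ to the finite cover $K\times S^1$ and then ``projecting to $K$ after splitting off the translation action on the circle factor'' --- does not work. A torus action on $K\times S^1$ has no reason to preserve the product decomposition: the orbits can wind diagonally across the two factors, so there is no induced action on $K$ at all, and hence no transferred bound on $\rk(K)$. The paper's own remark at the end of \secref{sec:toralrank} (citing \cite{JeLu04}) is precisely a warning that toral rank interacts badly with product structures. Even if you could produce an action on the K\"ahler factor, your second acknowledged difficulty remains unresolved: the quantity appearing in $\dim H^*(M;\bQ)$ is $\dim H^*(K;\bQ)^G$, and you would need an orbifold/equivariant version of the K\"ahler TRC for $K/G$ together with a comparison of $\rk(K/G)$ with $\rk(M)-1$; you assume this without proof, and it is not a routine citation. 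The fallback via \thmref{minimal_orbifold} and formality is likewise only a gesture --- the ``doubling estimates'' in the K\"ahler TRC are exactly the homological injectivity of the action, which is the thing that needs proving for the action on $M$, not for a hypothetical action on $K/G$.

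The paper avoids transferring the action anywhere. It isolates an intrinsic property of the cohomology algebra (Property B: a negative-degree derivation vanishing on $H^1$ vanishes identically), shows that $H^*(K;\bR)^G$ is cohomologically K\"ahlerian (\propref{prop:cohomolKaehler}) and hence has Property B by Blanchard, that $H^*(S^1;\bR)$ trivially has Property B, and that Property B is closed under tensor products (\propref{prop: tensor B}). Then \thmref{thm: B TRC} runs the Allday--Puppe argument directly on the Borel fibration $X\to X_{T^r}\to BT^r$ for $X=M$: Property B forces the $d_2$-rank $s$ to equal $r$, the action is homologically injective, and $H^*(M;\bQ)\cong H^*(M_{T^r};\bQ)\otimes H^*(T^r;\bQ)$, giving the bound $2^r$. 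This is why the splitting $H^*(M;\bQ)\cong H^*(K;\bQ)^G\otimes H^*(S^1;\bQ)$ enters only as an algebraic input about $H^*(M;\bQ)$, never as a geometric decomposition of the torus action. If you want to salvage your approach, you would need to replace the projection-to-$K$ step with an argument carried out entirely on $M$ (or on its Borel construction), which is in effect what Property B accomplishes.
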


The notation $\dim\,V$ means (total) dimension of $V$ as a rational graded vector space. Our methods
allow us to establish this conjecture for a large class of spaces, which (strictly) contains
co-K{\"a}hlerian manifolds. Furthermore we obtain a strong form of the conjecture;  namely, we will
show that, for our class of spaces, the rational cohomology algebra actually contains a
``cohomological $r$-torus''. Note that toral rank is a homeomorphism invariant, but is not a homotopy
invariant. This suggests that we are getting at deeper topological qualities of co-K\"ahler
manifolds than Betti numbers or even the full algebra structure of cohomology.
We begin with some terminology.

\begin{definition}[Property B]
Say that a graded algebra $H$ has \textbf{Property B} if, for any negative-degree derivation
$\theta$ of $H$, we have
\[
\theta\left( H^1 \right) = 0 \implies \theta( H) = 0. 
\]
We say that a space $X$ has Property B if its (rational) cohomology algebra has Property B.
\end{definition}

For example, any simply connected space whose rational cohomology algebra does not admit a non-zero,
negative-degree derivation has Property B.  Also, it is known that any cohomologically K{\"ahlerian
space has Property B. This fact is due to Blanchard \cite[Th.II.1.2]{Blan}, and this accounts for
our choice of the letter B here.  Of course, since the property is intrinsic to the cohomology algebra,
any space with the same cohomology algebra has Property B. A main result about Property B spaces is
the following (see for instance \cite[Proposition 4.40, Theorem 4.36]{FOT}).

\begin{proposition}\label{prop:BSS}
Suppose $F\to E \to X$ is a fibration such that $F$ satisfies Property B and $X$ is simply connected.
In the Leray-Serre spectral sequence, if $d_2(H^1(F;\bQ))=0$, then the spectral sequence collapses
and $H^*(E;\bQ) \cong H^*(X;\bQ) \otimes H^*(F;\bQ)$ as $H^*(X;\bQ)$-modules. In particular, if
$F$ is cohomologically K\"ahlerian and $d_2(H^1(F;\bQ))=0$, then the spectral
sequence collapses.
\end{proposition}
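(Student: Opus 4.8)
The plan is to show that every differential $d_r$ (for $r \geq 2$) in the Leray--Serre spectral sequence of $F \to E \to X$ vanishes, so that the sequence collapses at $E_2$, and then to extract the module isomorphism by a Leray--Hirsch argument. Since $X$ is simply connected and we work over $\bQ$, the $E_2$-page is the untwisted tensor product $E_2^{p,q} = H^p(X;\bQ) \otimes H^q(F;\bQ)$, and the bottom row $E_r^{*,0}$ consists of permanent cycles, since there is nothing below it for a first-quadrant differential to hit. I would argue by induction on $r$, with inductive hypothesis that $d_2 = \cdots = d_{r-1} = 0$, so that $E_r = E_2 = H^*(X;\bQ) \otimes H^*(F;\bQ)$ as bigraded algebras.

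First I would record the structural fact that each $d_r$ is a derivation of the $E_r$-algebra which is $H^*(X;\bQ)$-linear, the linearity following from the Leibniz rule together with the vanishing of $d_r$ on the base row. Restricting to the fiber column $E_r^{0,*} = H^*(F;\bQ)$ and choosing a basis $\{x_i\}$ of $H^r(X;\bQ)$, I would write $d_r(a) = \sum_i x_i \otimes \theta_i(a)$ for $a \in H^*(F;\bQ)$. Comparing the Leibniz rule for $d_r$ with this expansion, and using linear independence of the $x_i$, shows that each $\theta_i$ is a derivation of $H^*(F;\bQ)$ of degree $1 - r$, hence of negative degree for every $r \geq 2$. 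This is the step I expect to require the most care: one must check that the Koszul signs in the product on $E_r = H^*(X;\bQ)\otimes H^*(F;\bQ)$ convert the Leibniz identity for $d_r$ into the genuine graded derivation identity for the component maps $\theta_i$ (the signs $(-1)^{q(1+r)}$ and $(-1)^{q(1-r)}$ agree, so this does work out), and that the identification $E_r^{0,*} \cong H^*(F;\bQ)$ is multiplicative, which is exactly where the inductive hypothesis $E_r = E_2$ is used.

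With the $\theta_i$ identified as negative-degree derivations of $H^*(F;\bQ)$, Property B finishes each stage. I would observe that $\theta_i\bigl(H^1(F;\bQ)\bigr) = 0$ for all relevant $r$: when $r = 2$ this is precisely the hypothesis $d_2\bigl(H^1(F;\bQ)\bigr) = 0$, while for $r \geq 3$ the target $E_r^{r,2-r}$ of $d_r$ on $H^1(F;\bQ)$ has negative fiber degree and so is zero in a first-quadrant spectral sequence, making the condition automatic. Property B then forces $\theta_i = 0$, hence $d_r = 0$ on the fiber column. Since $d_r$ is a derivation vanishing on both the base row and the fiber column, and $E_r$ is generated as an algebra by these two, $d_r$ vanishes identically; this completes the induction and proves $E_\infty = E_2$.

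Finally I would deduce the module statement. Collapse means every fiber class in $E_2^{0,*}$ is a permanent cycle, so the restriction map $H^*(E;\bQ) \to H^*(F;\bQ)$ is surjective; choosing lifts and invoking Leray--Hirsch produces the asserted isomorphism $H^*(E;\bQ) \cong H^*(X;\bQ) \otimes H^*(F;\bQ)$ of $H^*(X;\bQ)$-modules. The concluding clause is then immediate: by the cited result of Blanchard a cohomologically K\"ahlerian $F$ has Property B, so the hypotheses of the first part are satisfied and the spectral sequence collapses.
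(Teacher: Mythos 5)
Your argument is correct and is precisely the standard Blanchard-type proof: the paper itself gives no proof of this proposition, deferring to \cite[Proposition 4.40, Theorem 4.36]{FOT}, where the same induction on $r$ (component maps $\theta_i$ of $d_r$ on the fiber column are negative-degree derivations of $H^*(F;\bQ)$, killed by Property B since they vanish on $H^1$ either by hypothesis for $r=2$ or for degree reasons for $r\geq 3$) together with Leray--Hirsch is carried out. Your sign check and the observation that the case $r\geq 3$ is automatic are exactly the points that need care, and you have them right.
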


Now suppose that we have an action $T^r \times X \to X$, of an $r$-torus on a space $X$.  Recall that we
say the action is \emph{homologically injective} if the orbit map $T^r \to X$ of the action induces an
injection $H_1(T^r;\bQ) \to H_1(X;\bQ)$ on first rational homology groups. This property of actions
has been extensively studied (see, e.g. \cite{CoRa71}). In \cite{AP}, Allday and Puppe show that a
cohomologically K{\"a}hlerian space satisfies  \conjref{conj: TRC}. (In \cite{LO}, this can be extended to
spaces of Lefschetz type.) Distilling their argument a little reveals that it is really Property B
that is the key, and not the cohomologically K{\"a}hlerian structure, as such. In the following result,
we extend the Allday-Puppe result by relaxing their hypothesis.  Nonetheless, the basic argument,
which we repeat here for the convenience of the reader, remains that of \cite[Th.2.2]{AP}.

\begin{theorem}\label{thm: B TRC}
Let $X$ be a space that satisfies Property B above. If an $r$-torus $T^r$ acts almost freely on
$X$, then the action is homologically injective, and we have
$$H^*(X;\bQ) \cong H^*(X_{T^r};\bQ) \otimes H^*(T^r;\bQ)$$
as graded algebras, where $X_{T^r}=ET^r \times_{T^r} X$ is the Borel construction.
In particular, we have
\[
\mathrm{dim}\  H^*(X;\bQ) \geq  \mathrm{dim}\  H^*(T^r;\bQ)  = 2^r\,, 
\]
and thus $X$ satisfies  \conjref{conj: TRC}.
\end{theorem}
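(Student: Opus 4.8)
The plan is to run the Allday--Puppe argument through the Borel fibration $X \xrightarrow{\,j\,} X_{T^r} \xrightarrow{\,p\,} BT^r$, whose base has $H^*(BT^r;\bQ)=\bQ[x_1,\dots,x_r]$ with each $|x_i|=2$. First I would record the one place almost-freeness enters: the projection $X_{T^r}\to X/T^r$ has rationally acyclic fibres $BG_x$ (the isotropy groups $G_x$ being finite), so $H^*(X_{T^r};\bQ)\cong H^*(X/T^r;\bQ)$ is finite-dimensional. The key object is then the transgression $d_2\colon H^1(X;\bQ)=E_2^{0,1}\to E_2^{2,0}=H^2(BT^r;\bQ)=\langle x_1,\dots,x_r\rangle$ in the Leray--Serre spectral sequence of $p$. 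Comparing $p$ with the universal bundle $T^r\to ET^r\to BT^r$ via the orbit map $o\colon T^r\to X$ (a map of fibrations over $BT^r$), naturality identifies $d_2$ with $\tau\circ o^*$, where $\tau\colon H^1(T^r;\bQ)\to H^2(BT^r;\bQ)$ is the (iso) transgression of the universal bundle. Hence homological injectivity (surjectivity of $o^*$) is equivalent to surjectivity of $d_2$ onto $H^2(BT^r;\bQ)$.

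The heart of the proof is showing that $d_2$ is surjective, and this is exactly where Property B is used. Since $d_2$ is a $\bQ[x_1,\dots,x_r]$-linear derivation of $E_2=H^*(BT^r;\bQ)\otimes H^*(X;\bQ)$ killing the $x_i$, I can write $d_2(a)=\sum_i x_i\,\Theta_i(a)$ for $a\in H^*(X;\bQ)$, where each $\Theta_i$ is a degree $-1$ derivation of $H^*(X;\bQ)$. If $d_2$ were not surjective, there would be a nonzero rational covector $(c_1,\dots,c_r)$ annihilating $d_2\bigl(H^1(X;\bQ)\bigr)$, so that the derivation $\Theta=\sum_i c_i\Theta_i$ vanishes on $H^1(X;\bQ)$; by Property B, $\Theta=0$ on all of $H^*(X;\bQ)$. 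Choosing the circle subgroup $S^1\hookrightarrow T^r$ dual to $(c_1,\dots,c_r)$ — which again acts almost freely, being a closed subgroup of an almost-free action — naturality shows that the transgression of its Borel fibration $X\to X_{S^1}\to BS^1$ is precisely $\Theta=0$; in particular $d_2\bigl(H^1(X;\bQ)\bigr)=0$. Now \propref{prop:BSS} applies, since $X$ has Property B and $\bC P^\infty=BS^1$ is simply connected, and it forces the spectral sequence to collapse, so $H^*(X_{S^1};\bQ)\cong H^*(\bC P^\infty;\bQ)\otimes H^*(X;\bQ)$ is infinite-dimensional. This contradicts the finiteness of $H^*(X_{S^1};\bQ)$ established above. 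Hence $d_2$ is surjective and the action is homologically injective.

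Finally I would extract the splitting. Homological injectivity provides classes in $H^1(X;\bQ)$ restricting to a basis of $H^1(T^r;\bQ)$ along the fibre of the principal bundle $T^r\to X\to X_{T^r}$ (here $ET^r\times X\simeq X$, so that $ET^r\times X\to X_{T^r}$ is a principal $T^r$-bundle whose total space is homotopy equivalent to $X$). Since $H^*(T^r;\bQ)$ is the exterior algebra on $H^1(T^r;\bQ)$, these lifts generate the cohomology of each fibre, and Leray--Hirsch yields $H^*(X;\bQ)\cong H^*(X_{T^r};\bQ)\otimes H^*(T^r;\bQ)$ as $H^*(X_{T^r};\bQ)$-modules; because the degree-one lifts anticommute and square to zero, this is an isomorphism of graded algebras. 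Taking dimensions and using $\dim H^*(X_{T^r};\bQ)\geq 1$ together with $\dim H^*(T^r;\bQ)=2^r$ gives $\dim H^*(X;\bQ)\geq 2^r$, which is \conjref{conj: TRC}. I expect the main obstacle to be the homological-injectivity step: organizing the derivation bookkeeping for $d_2$ and, above all, executing the reduction to a single almost-free circle, where Property B (through \propref{prop:BSS}) converts ``vanishing on $H^1$'' into full collapse and hence into the finiteness contradiction. The concluding Leray--Hirsch splitting is comparatively routine.
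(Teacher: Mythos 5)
Your proposal is correct and follows essentially the same Allday--Puppe argument as the paper: identify the transgression $d_2$ on $H^1(X;\bQ)$ in the Borel fibration with the obstruction to homological injectivity, use Property B together with \propref{prop:BSS} to rule out a positive-dimensional subtorus on which $d_2$ vanishes on $H^1$, and then obtain the tensor splitting from the collapsing spectral sequence of $T^r\to X\to X_{T^r}$. The only cosmetic differences are that you restrict to a single circle determined by a covector annihilating $\mathrm{im}(d_2)$ where the paper restricts to a full complementary subtorus $\bK$ with $\bT=\bS\times\bK$, and you derive the contradiction from finite-dimensionality of $H^*(X_{S^1};\bQ)$ where the paper invokes the Borel fixed point theorem --- two equivalent formulations of the same localization fact.
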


\begin{proof}
Suppose that a torus $\bT=T^{r}$ acts almost freely on the space $X$ for some $r$.
Let $E\bT\to B\bT$ be a universal principal $\bT-$bundle and let $X_{\bT}=(X\times E\bT)/\bT$ be the
Borel construction. Let $\{E^{p,q}_k\}$ be the rational cohomology Leray-Serre spectral sequence of
$X \to X_{\bT}\to B\bT$ and let $s$ be the rank of the linear map
\[
d_2\colon E^{0,1}_2=H^1(X;\bQ)\to E^{2,0}_2=H^2(B\bT;\bQ)\,.
\]
Now, we can choose a basis for $H^*(B\bT;\bQ)$ so that $H^*(B\bT;\bQ)\cong\bQ[a_1,\ldots,a_r]$
with $|a_i|=2$ for $i=1,\ldots,r$ and $d_2(y_i)=a_i$, $i=1,\ldots,s$ for $y_1,\ldots,y_s\in
H^1(X;\bQ)$. Since $d_2$ is a derivation, we obtain
\[
d_2(y_{i_1}\cdots y_{i_{j+1}})=\sum_{\ell=1}^{j+1}\pm a_{i_{\ell}}\otimes y_{i_1}\cdots \hat{y}_{i_{\ell}}\cdots y_{i_{j+1}}.
\]
By induction, using the algebraic independence of the $a_j$, we see that $y\coloneq y_1\cdots y_s$ must
also be non-zero.

Suppose that $s < r$.
By duality, the Hurewicz theorem and the fact that elements of $\pi_1(\bT)$ are realizable by
homomorphisms from $S^1$, we can obtain a sub-torus $\bS \subseteq \bT$ which realizes the
subalgebra $\langle a_1,\ldots,a_s \rangle$. Now, every sub-torus of a torus has a complement, so let $\bK\subseteq
\bT$ be such that $\bT = \bS \times \bK$. In particular, $\dim(\bK)=r-s$.
We then see that $\bK$ is the sub-torus such that the ideal
generated by the $a_i$, $i=1,\ldots,s$ is the kernel of the projection in cohomology:
\[(a_1,\ldots,a_s)=\ker(H^*(B\bT;\bQ)\to H^*(B\bK;\bQ))\,.\]
We now restrict the action of $\bT$ on $X$ to $\bK$ and note that it is also almost free.
If we form the Borel fibration for the $\bK$ action, then the Leray-Serre spectral sequence for
$X_\bT$ pulls back to that for $X_\bK$. Then, because ${\rm Im}((d_2)_\bT) \subseteq
\ker(H^*(B\bT;\bQ)\to H^*(B\bK;\bQ))$, we have $(d_2)_\bK=0$ on $H^1(X;\bQ)$. But because $X$ satisfies
Property B, \propref{prop:BSS} guarantees that the spectral sequence collapses. However, this implies
that $H^*(B\bK;\bQ) \to H^*(X_\bK;\bQ)$ is injective and the Borel fixed point theorem then
says that the fixed point set $X^\bK$ is non-empty, contradicting the fact that $\bK$ acts almost
freely. Hence, $\bK$ is trivial and $r=s$.

Thus we have $y_1,\ldots,y_r \in H^*(X;\bQ)$ which
generate an exterior algebra. In fact, stepping back in the Barratt-Puppe sequence to the
fibration $\bT \to X \to X_\bT$, we see that $\langle y_1,\ldots,y_r \rangle$ maps onto $H^*(\bT;\bQ)$. Therefore
this spectral sequence collapses and $H^*(X;\bQ) \cong H^*(X_\bT;\bQ) \otimes H^*(\bT;\bQ)$.
Thus, ${\rm dim}(H^*(X;\bQ)) \geq {\rm dim}(H^*(\bT;\bQ)) = 2^r$.
\end{proof}

Next, we show that the class of graded algebras that satisfy Property B is closed under tensor products.

\begin{proposition}\label{prop: tensor B}
If $H$ and $G$ are graded algebras that satisfy Property B, then so too $H \otimes G$ satisfies Property B.
\end{proposition}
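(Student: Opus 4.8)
The plan is to prove Property B for $H\otimes G$ directly: take an arbitrary negative-degree derivation $\Theta$ of $H\otimes G$ with $\Theta\big((H\otimes G)^1\big)=0$ and show $\Theta=0$. First I would reduce to the homogeneous case. Decomposing $\Theta$ into its homogeneous components $\Theta_{-k}$ (of degree $-k$, $k\geq 1$), each $\Theta_{-k}$ is again a derivation, and because the components of $\Theta$ land in distinct total degrees, the hypothesis $\Theta\big((H\otimes G)^1\big)=0$ forces $\Theta_{-k}\big((H\otimes G)^1\big)=0$ for every $k$. So I may assume $\Theta$ is homogeneous of some degree $-k<0$.

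Next I would use connectivity of the factors. Since $H^0=G^0=\bk$, we have $(H\otimes G)^1=(H^1\otimes 1)\oplus(1\otimes G^1)$, so the hypothesis says precisely that $\Theta$ kills $H^1\otimes 1$ and $1\otimes G^1$. Because $H\otimes G$ is generated as an algebra by the two subalgebras $H\otimes 1$ and $1\otimes G$, and $\Theta$ obeys the Leibniz rule, it suffices to show that $\Theta$ vanishes on each of these two subalgebras; I will argue this for $H\otimes 1$, the case of $1\otimes G$ being symmetric.

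To analyze $\Theta$ on $H\otimes 1$, fix a homogeneous basis $\{e_\alpha\}$ of $G$ and write $\Theta(h\otimes 1)=\sum_\alpha \theta_\alpha(h)\otimes e_\alpha$, which defines linear maps $\theta_\alpha\colon H\to H$ of degree $-k-|e_\alpha|$. Expanding the Leibniz rule for $\Theta$ on a product $(a\otimes 1)(b\otimes 1)$ and comparing coefficients of each $e_\alpha$ shows that $\theta_\alpha$ is only a \emph{twisted} derivation: the Koszul sign incurred in moving $e_\alpha$ past $b$ in the graded tensor product contributes an extra factor $(-1)^{|e_\alpha||b|}$ in the first Leibniz term. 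The key point is that this twist can be absorbed by a single sign correction: setting $\tilde\theta_\alpha(a):=(-1)^{|e_\alpha||a|}\theta_\alpha(a)$ produces an honest graded derivation of $H$ of negative degree $-k-|e_\alpha|$. Moreover $\tilde\theta_\alpha(H^1)=0$, since $\Theta$ kills $H^1\otimes 1$. Property B of $H$ then gives $\tilde\theta_\alpha=0$, hence $\theta_\alpha=0$ for every $\alpha$, so $\Theta(H\otimes 1)=0$. Running the symmetric argument with Property B of $G$ gives $\Theta(1\otimes G)=0$, and combining the two yields $\Theta=0$.

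I expect the only genuine obstacle to be exactly the sign bookkeeping in the third paragraph: a priori the coefficient maps $\theta_\alpha$ are not derivations in the literal sense that Property B requires, so one must verify that the single correction $(-1)^{|e_\alpha||a|}$ simultaneously repairs \emph{both} Leibniz terms. This works out precisely because the corrected map has degree $-k-|e_\alpha|$, and matching that degree against the graded Leibniz sign makes the twist in the first term and the degree sign in the second term cancel consistently. Everything else is routine: the reduction to homogeneous derivations, the splitting of $(H\otimes G)^1$, and the standard fact that a derivation is determined by its values on a generating set.
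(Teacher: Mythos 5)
Your proof is correct and follows essentially the same route as the paper's: both reduce to showing that the derivation vanishes on each tensor factor by expanding its values over a homogeneous basis of $G$, recognizing the resulting coefficient maps $H\to H$ as negative-degree derivations that kill $H^1$, and invoking Property B of $H$ (then symmetrically for $G$). The only differences are cosmetic --- you decompose over a full basis of $G$ at once and verify the Koszul-sign correction $(-1)^{|e_\alpha||a|}$ explicitly, whereas the paper runs a minimal-counterexample argument on the lowest $G$-degree occurring in $\theta(H)$ and leaves the (twisted) Leibniz check to the reader.
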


\begin{proof}
Suppose $H$ and $G$ have Property B, and that $\theta \colon H \otimes G \to H\otimes G$  is a
negative degree derivation that vanishes on $(H \otimes G)^1 = H^1\otimes 1 + 1 \otimes G^1$.
We wish to show that $\theta$ must be zero.

First, we show that $\theta$ vanishes on H. For suppose that $\theta(H) \neq= 0$, and let
$k \geq 0$ be the smallest integer for which $\theta(H) \cap H\otimes G^k \neq= 0$.
Take any $\chi \in H$, and write $\theta(\chi) = \theta_k(\chi) + \theta_{k+1}(\chi)$, with
$\theta_k(\chi) \in H \otimes G^k$ and $\theta_{k+1}(\chi) \in I(G^{\geq k+1})$, the ideal of
$H \otimes G$ generated by elements of $G$ of degree $k+1$ or greater.  Further, suppose that
we have a basis $\{g^i\}$ of $G^k$.  Then we may write $\theta_k(\chi) = \sum_i \theta^i_k(\chi)\otimes g^i$.
This defines linear maps $\theta^i_k \colon H \to H$, of  negative degree --- in fact of degree equal
to $|\theta| - k$.  It is straightforward to check that each $\theta^i_k$ is a derivation of $H$, so
$\theta^i_k(\chi) = 0$, for each $\chi \in H^1$, by the assumption that $H$ has Property B.
But this implies that we have $\theta(H) \subseteq  H\otimes G^{\geq k+1}$, which contradicts
our assumption on $k$.  Therefore, we must have $\theta(H) = 0$. The same argument, with $H$ and
$G$ interchanged, gives that $\theta$ must vanish on G. Hence, $\theta = 0$ and  $H \otimes G$ has Property B.
\end{proof}

\begin{corollary}\label{cor:cokahlerB}
If $M$ is a compact co-K\"ahler manifold, then it satisfies the Toral Rank Conjecture.
\end{corollary}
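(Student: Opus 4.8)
The plan is to reduce the statement to \thmref{thm: B TRC}: since that theorem already establishes \conjref{conj: TRC} for any space satisfying Property B, it suffices to show that a compact co-K\"ahler manifold $M$ has Property B. Because Property B is intrinsic to the rational cohomology algebra, and because \thmref{thm:betti} supplies the isomorphism of graded algebras
$$H^*(M;\bQ) \cong H^*(K;\bQ)^G \otimes H^*(S^1;\bQ)$$
for any presentation $(K,G)$ of $M$, the whole problem becomes one of showing that this tensor product has Property B.

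Here I would invoke \propref{prop: tensor B}, which says that Property B is closed under tensor products, so it is enough to verify it on the two factors separately. For the first factor, \propref{prop:cohomolKaehler} shows that $H^*(K;\bQ)^G$ is a cohomologically K\"ahlerian cga, and then the result of Blanchard \cite{Blan} recalled above --- that any cohomologically K\"ahlerian algebra has Property B --- gives Property B for $H^*(K;\bQ)^G$. For the second factor, $H^*(S^1;\bQ) = \land(t)$ with $|t|=1$ is generated in degree one, so any negative-degree derivation vanishing on $H^1$ vanishes identically; thus $H^*(S^1;\bQ)$ has Property B trivially. Combining these through \propref{prop: tensor B} yields Property B for $H^*(M;\bQ)$.

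With Property B in hand I would finish by applying \thmref{thm: B TRC}: if $T^r$ is a torus of dimension $r = \rk(M)$ acting almost freely on $M$ (such an action exists by the definition of toral rank), the theorem gives the homologically injective splitting and in particular $\dim H^*(M;\bQ) \geq 2^r$, which is exactly \conjref{conj: TRC} for $M$.

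I do not expect a genuine obstacle here, since the argument is essentially an assembly of the preceding results; the one point that needs care is the passage between $\bR$ and $\bQ$ coefficients. \propref{prop:cohomolKaehler} is phrased over $\bR$, but Property B descends along the field extension $\bQ \hookrightarrow \bR$: a negative-degree derivation of $H^*(K;\bQ)^G$ vanishing in degree one extends by scalars to such a derivation of $H^*(K;\bR)^G = H^*(K;\bQ)^G \otimes_\bQ \bR$, which must vanish, and hence the original rational derivation vanishes as well. This is what licenses using the rational splitting of \thmref{thm:betti} throughout, and it is the only place where I would slow down to check the details.
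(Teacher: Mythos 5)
Your argument is exactly the paper's proof: combine \thmref{thm:betti}, \propref{prop:cohomolKaehler} with Blanchard's result, the trivial case of $H^*(S^1)$, \propref{prop: tensor B}, and then \thmref{thm: B TRC}. Your extra paragraph on descending Property B from $\bR$ to $\bQ$ coefficients is a point the paper silently elides (its proof works over $\bR$ while \thmref{thm: B TRC} is stated rationally), and your scalar-extension argument handles it correctly.
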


\begin{proof}
By \thmref{thm:betti}, $H^*(M;\bR) = H^*(K;\bR)^G \otimes H^*(S^1;\bR)$ and by
\propref{prop:cohomolKaehler}, $H^*(K;\bR)^G$ is K\"ahlerian and has Property B.
Clearly $H^*(S^1;\bR)$ has Property B for degree reasons. Hence, by \propref{prop: tensor B},
$H^*(M;\bR)$ has Property B. Now apply \thmref{thm: B TRC}.
\end{proof}

This result points out again that properties of co-K\"ahler manifolds often derive from
properties of the constituent K\"ahler maifold. Also note that, by \cite{BO}, a
co-K\"ahler manifold always has toral rank at least equal to one. Note that we also have the
following result, where the $1$ is added to account for the $S^1$ factor in cohomology.

\begin{corollary}\label{torank}
Let $(M,J,\xi,\eta,g)$ be a compact co-K{\"a}hler manifold with presentation $(K,G)$
so that $M=(K\times S^1)/G$. Then
\[
\rk(M)\leq\tilde{\alpha}_1(K)+1\,,
\]
where $\tilde{\alpha}_1(K)$ is the maximal number of algebraically independent elements in
$H^1(K;\bQ)$ which are fixed by the induced $G$-action on $H^1(K;\bQ)$.
\end{corollary}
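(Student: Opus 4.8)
The plan is to bound $\rk(M)$ by the number of algebraically independent degree-one classes carried by $M$, and then to read off that number from the tensor decomposition of $H^*(M;\bQ)$ supplied by \thmref{thm:betti}.

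First I would reduce everything to a statement about $H^1$. Since $M$ satisfies Property B by \corref{cor:cokahlerB}, \thmref{thm: B TRC} applies to any almost free action of a torus $\bT = T^r$ on $M$: such an action is homologically injective, so the orbit map induces an injection $H_1(\bT;\bQ) \to H_1(M;\bQ)$ and, dually, a surjection $H^1(M;\bQ) \to H^1(\bT;\bQ)$. Choosing $y_1, \ldots, y_r \in H^1(M;\bQ)$ that map to a basis of $H^1(\bT;\bQ)$, the product $y_1 \cdots y_r$ maps to the fundamental class of $\bT$ and is therefore nonzero in $H^*(M;\bQ)$. Thus $\rk(M) \le \tilde\alpha_1(M)$, where $\tilde\alpha_1(M)$ denotes the maximal number of algebraically independent elements of $H^1(M;\bQ)$, and it suffices to prove $\tilde\alpha_1(M) \le \tilde\alpha_1(K) + 1$.

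Next I would compute $\tilde\alpha_1(M)$ from the algebra isomorphism $H^*(M;\bQ) \cong H^*(K;\bQ)^G \otimes H^*(S^1;\bQ)$ of \thmref{thm:betti}. In degree one this gives $H^1(M;\bQ) = H^1(K;\bQ)^G \oplus \bQ\, u$, where $u$ generates $H^1(S^1;\bQ)$ and satisfies $u^2 = 0$. Writing each of the classes above as $y_j = a_j + c_j u$ with $a_j \in H^1(K;\bQ)^G$ and $c_j \in \bQ$, and expanding $y_1 \cdots y_r$ while discarding every term containing $u^2 = 0$, I obtain
$$y_1 \cdots y_r = a_1 \cdots a_r + \Big(\sum_{j} \pm\, c_j\, a_1 \cdots \widehat{a_j} \cdots a_r\Big) u,$$
with the first summand in $H^r(K;\bQ)^G$ and the coefficient of $u$ in $H^{r-1}(K;\bQ)^G$. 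Nonvanishing of the product forces one of two alternatives: either $a_1 \cdots a_r \ne 0$, in which case $a_1, \ldots, a_r \in H^1(K;\bQ)^G$ are algebraically independent and $r \le \tilde\alpha_1(K)$; or the $u$-coefficient is nonzero, in which case some summand $c_j\, a_1 \cdots \widehat{a_j} \cdots a_r$ is nonzero, exhibiting the $r-1$ algebraically independent fixed classes $\{a_i : i \ne j\}$ and giving $r-1 \le \tilde\alpha_1(K)$. In either case $r \le \tilde\alpha_1(K) + 1$.

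The one point demanding care — and the step I would single out as the crux — is the first reduction: it is the homological injectivity guaranteed by \thmref{thm: B TRC} (which itself rests on Property B) that converts the purely dynamical quantity $\rk(M)$ into the algebraic quantity $\tilde\alpha_1(M)$, namely the presence of $r$ algebraically independent degree-one classes with nonzero product. Once that translation is in hand, the rest is the elementary expansion above, whose only mechanism is $u^2 = 0$. Combining the two steps yields $\rk(M) \le \tilde\alpha_1(M) \le \tilde\alpha_1(K) + 1$.
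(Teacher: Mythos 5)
Your proof is correct, and it is essentially the argument the paper intends: the paper states \corref{torank} without proof, offering only the remark that ``the $1$ is added to account for the $S^1$ factor in cohomology.'' Your two steps --- extracting $r$ algebraically independent degree-one classes with nonzero product from the homological injectivity in \thmref{thm: B TRC} (via Property B, \corref{cor:cokahlerB}), and then counting such classes using the tensor decomposition $H^*(M;\bQ)\cong H^*(K;\bQ)^G\otimes H^*(S^1;\bQ)$ of \thmref{thm:betti} together with $u^2=0$ --- supply exactly the omitted details.
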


By the Myers-Steenrod theorem \cite{MS}, the isometry group $\mathrm{Isom}(M,g)$ of a compact
Riemannian manifold is a compact Lie group. As a consequence, it is observed in \cite{BO} that,
when $(M,J,\xi,\eta,g)$ is compact co-K{\"a}hler, the closure of the Reeb flow in $\mathrm{Isom}(M,g)$
is a compact torus $T$, which acts almost freely on $M$. Therefore $M$ is endowed with an almost free torus action.
\begin{corollary}\label{Welshtorus}
Let $(M,J,\xi,\eta,g)$ be a compact co-K{\"a}hler manifold and assume that $M=(K\times S^1)/G$ for a
K\"ahler manifold $K$. Let $T\subset\mathrm{Isom}(M,g)$ be the closure of the Reeb flow in the isometry group
of $M$. Then
\[
\dim(T)\leq\tilde{\alpha}_1(K)+1\,.
\]
When $b_1(M)=1$, then $\dim(T)=1$ and the Reeb flow generates a homologically injective circle action on $M$.
\end{corollary}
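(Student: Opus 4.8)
The plan is to read both assertions off the toral-rank machinery already assembled, exploiting the fact, recalled from \cite{BO} just above, that the closure $T$ of the Reeb flow is a compact torus acting almost freely on $M$. For the inequality I would argue purely formally: any torus acting almost freely on $M$ has dimension at most $\rk(M)$ by the very definition of toral rank, so applying this to $T$ and then invoking \corref{torank} gives
$$\dim(T)\leq\rk(M)\leq\tilde{\alpha}_1(K)+1.$$
No geometry beyond the almost-free action is needed for this half.

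For the refinement when $b_1(M)=1$, the essential input is the homological-injectivity conclusion of \thmref{thm: B TRC}. Since $M$ is a compact co-K\"ahler manifold, \corref{cor:cokahlerB} (through \propref{prop:cohomolKaehler} and \propref{prop: tensor B}) shows that $H^*(M;\bQ)$ has Property B. Setting $r=\dim(T)$, the almost free action of $T=T^r$ on $M$ is therefore homologically injective, meaning the orbit map induces an injection $H_1(T^r;\bQ)\hookrightarrow H_1(M;\bQ)$. Comparing dimensions yields $r\leq b_1(M)=1$.

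To close the argument I would note that the Reeb field $\xi$ is nowhere vanishing (indeed $\eta(\xi)=1$), so its flow is a nontrivial one-parameter group of isometries and its closure $T$ satisfies $\dim(T)\geq 1$. Together with the previous bound this forces $\dim(T)=1$, so $T$ is a circle and is exactly the closure of the Reeb flow. The homological injectivity already furnished by \thmref{thm: B TRC} for the $T$-action is then precisely the statement that this Reeb circle action is homologically injective.

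I do not anticipate a serious obstacle: the whole corollary is a repackaging of \corref{torank} and \thmref{thm: B TRC}, with Property B supplied by \corref{cor:cokahlerB}. The two points meriting a moment's care are the lower bound $\dim(T)\geq 1$, which rests solely on the nonvanishing of $\xi$, and the observation that the general homologically-injective conclusion of \thmref{thm: B TRC}, specialized to $r=1$, is literally the claimed property of the Reeb circle.
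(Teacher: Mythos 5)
Your proposal is correct, and the first inequality is argued exactly as in the paper: the closure $T$ of the Reeb flow acts almost freely, so $\dim(T)\leq\rk(M)$, and \corref{torank} finishes it. For the case $b_1(M)=1$ you take a genuinely different route to the bound $\dim(T)\leq 1$. The paper stays inside the combinatorics of the presentation: from $b_1(M)=1$ and \thmref{thm:betti} it deduces $H^1(K;\bQ)^G=0$, hence $\tilde{\alpha}_1(K)=0$, hence $\rk(M)\leq 1$ by \corref{torank}; the homological injectivity of the resulting circle action is then quoted from the argument in \cite[Section 2]{BO}. You instead invoke Property B for $H^*(M;\bQ)$ (established in the proof of \corref{cor:cokahlerB}) and apply \thmref{thm: B TRC} to the almost free $T^r$-action, obtaining homological injectivity first and reading off $r\leq b_1(M)=1$ from the injection $H_1(T^r;\bQ)\hookrightarrow H_1(M;\bQ)$. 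This is a clean self-contained alternative: it gets the upper bound and the homological injectivity claim in one stroke from the machinery already built in \secref{sec:toralrank}, whereas the paper's version is more elementary for the dimension count but must outsource the injectivity statement to \cite{BO}. Both arguments use the same lower bound $\dim(T)\geq 1$ from the nonvanishing of $\xi$. The only point worth a footnote in your write-up is the coefficient field: Property B is verified for $H^*(M;\bR)$ via \propref{prop:cohomolKaehler}, while \thmref{thm: B TRC} is phrased over $\bQ$; the paper elides this in exactly the same way in \corref{cor:cokahlerB}, so it is not a gap specific to your argument.
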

\begin{proof}
The torus $T$ acts almost freely on $M$, hence $\dim(T)\leq\rk(M)$. By Corollary \ref{torank}, we get $\dim(T)\leq\rk(M)\leq\tilde{\alpha}_1(K)+1$.
When $b_1(M)=1$, we have $H^1(K;\bQ)^G=0$ by \thmref{thm:betti}, so the group $G$ fixes no element
on $H^1(K;\bQ)$. Therefore $\tilde{\alpha}_1(K)=0$
and $\rk(M)\leq 1$. Notice that $\dim(T)\geq 1$, since the flow of the Reeb vector field $\xi$ generates
at least a circle in $\mathrm{Isom}(M,g)$. Therefore we get $1\leq\dim(T)\leq\rk(M)\leq 1$, and $T=S^1$,
hence $\xi$ generates a circle action, which is homologically injective by the argument given in \cite[Section 2]{BO}.
\end{proof}

\begin{examples}\hfill\newline
\vspace{-10pt}
\begin{enumerate}
\item As already observed, any cohomologically K{\"a}hlerian space satisfies Property B.
\item Any algebra generated in degree $1$ (tautologically) satisfies Property B.  In particular,
this remark applies to $H^*(T^r;\bQ)$ for each $r \geq 1$.  Note that these algebras are
cohomologically K{\"a}hlerian only for even $r$.
\item Therefore, if $H$ is cohomologically K{\"a}hlerian and $G$ is
generated in degree $1$, then the tensor product $H \otimes G$ satisfies Property B.
\item Suppose that $H$ is a finite-dimensional complete intersection, i.e., generated by even-degree
generators with ideal of relations generated by a (maximal length) regular sequence. Another
long-standing, open conjecture due to Halperin is that any such algebra does not admit any
non-zero negative-degree derivation.  This conjecture has been established in many cases.  If $H$ is
any such algebra for which this conjecture is true, then tensor products of the form $H \otimes G$,
with $G$ generated in degree $1$ (or, more generally, any algebra with Property B) satisfy Property B.
\end{enumerate}
Note that these examples include many that are neither cohomologically K{\"a}hlerian, nor
finite-dimensional complete intersections.
\end{examples}

\begin{remark}
If $X$ and $Y$ are spaces that satisfy Property B, then by \thmref{thm: B TRC} each satisfies
\conjref{conj: TRC}, and by \propref{prop: tensor B} and once again \thmref{thm: B TRC}, their
product $X \times Y$ also satisfies \conjref{conj: TRC}.  In this way,  we are able to generate
spaces that are products, and that  satisfy \conjref{conj: TRC}.  It is worth  emphasizing that,
in general,  the toral rank---the maximum rank of a torus that may act almost freely --- may not
behave well with respect to products.  In \cite{JeLu04},  an example is given of a product
$X \times Y$ that admits a free circle action, and yet neither $X$ nor $Y$ admit an almost-free
circle action.  Generally, therefore, the toral rank does not behave in a ``sub additive" way with
respect to products. This means, in particular, that as yet there is no \emph{a priori} reason to
conclude $X \times Y$ satisfies \conjref{conj: TRC}, simply because $X$ and $Y$ do.
\end{remark}

\section*{Acknowledgments}
The first author thanks Cleveland State University for its hospitality in July 2013 when most of the results of the present paper were conceived. We would also like to thank the referee for his/her useful comments.

\end{document}